\newdimen\bibspace
\numberwithin{equation}{section}
\newtheorem{theorem}{Theorem}[section]
\newtheorem{lemma}[theorem]{Lemma}
\newtheorem{proposition}[theorem]{Proposition}
\newtheorem{corollary}[theorem]{Corollary}
\newtheorem{remark}[theorem]{Remark}
\def\<{\langle}
\def\>{\rangle}
\def\XXint#1#2#3{{\setbox0=\hbox{$#1{#2#3}{\int}$ }
\vcenter{\hbox{$#2#3$ }}\kern-.6\wd0}}
\begin{document}

\title{Asymptotic expansion  at infinity of solutions of\\ special Lagrangian  equations}

\author{Zixiao Liu,\quad Jiguang Bao\footnote{Supported in part by Natural Science Foundation of China (11871102 and 11631002).}}
\date{\today}

\maketitle

\begin{abstract}
We obtain a quantitative high order expansion at infinity of solutions for a family of fully nonlinear elliptic equations on exterior domain, refine the study of the asymptotic behavior of the Monge-Amp\`ere equation, the special Lagrangian equation and other elliptic equations, and give the precise gap between exterior maximal (or minimal) gradient graph and the entire case.

 {\textbf{Keywords:}} Monge-Amp\`ere equation, Special Lagrangian equation, Asymptotic expansion.

 {\textbf{MSC~2020:}}~~ 35J60;~~35B40
\end{abstract}

\section{Introduction}

Consider a Lagrangian submanifold of $\mathbb{R}^{n} \times \mathbb{R}^{n}$   that can be represented locally as a gradient graph $(x, D u(x))$.
In 2010, Warren \cite{Warren} first studied the minimal or maximal Lagrangian graph in $\left(\mathbb{R}^{n} \times \mathbb{R}^{n}, g_{\tau}\right)$, where
\begin{equation*}
g_{\tau}=\sin \tau \delta_{0}+\cos \tau g_{0},\quad\tau \in\left[0, \frac{\pi}{2}\right],
\end{equation*}
is the linearly combined metric of standard Euclidean metric
\begin{equation*}
\delta_{0}=\sum_{i=1}^{n} d x_{i} \otimes d x_{i}+\sum_{j=1}^{n} d y_{j} \otimes d y_{j},
\end{equation*}
and the pseudo-Euclidean metric
\begin{equation*}
g_{0}=\sum_{i=1}^{n} d x_{i} \otimes d y_{i}+ \sum_{j=1}^{n} d y_{j} \otimes d x_{j}.
\end{equation*}
He proved that if $u\in C^2(\Omega)$
is a solution of
\begin{equation}\label{equation}
F_{\tau}\left(\lambda\left(D^{2} u\right)\right)=C_0,\quad x\in\Omega,
\end{equation}
where $\Omega\subset\mathbb R^n$ is a domain,
then the volume of
$(x,Du(x))$ is a maximal for $\tau\in (0,\frac{\pi}{4})$ and minimal for $\tau\in (\frac{\pi}{4},\frac{\pi}{2})$ among all homologous, $C^1$, space-like $n$-surfaces in $\left(\mathbb{R}^{n} \times \mathbb{R}^{n}, g_{\tau}\right)$.
In (\ref{equation}), $C_0$ is a constant,  $\lambda(D^2u)=(\lambda_1,\lambda_2,\cdots,\lambda_n)$ are $n$ eigenvalues of  Hessian matrix $D^2u$ and  $$
F_{\tau}(\lambda):=\left\{
\begin{array}{ccc}
\displaystyle  \frac{1}{n} \sum_{i=1}^{n} \ln \lambda_{i}, & \tau=0,\\
\displaystyle  \frac{\sqrt{a^{2}+1}}{2 b} \sum_{i=1}^{n} \ln \frac{\lambda_{i}+a-b}{\lambda_{i}+a+b},
  & 0<\tau<\frac{\pi}{4},\\
  \displaystyle-\sqrt{2} \sum_{i=1}^{n} \frac{1}{1+\lambda_{i}}, & \tau=\frac{\pi}{4},\\
  \displaystyle\frac{\sqrt{a^{2}+1}}{b} \sum_{i=1}^{n} \arctan \displaystyle\frac{\lambda_{i}+a-b}{\lambda_{i}+a+b}, &
  \frac{\pi}{4}<\tau<\frac{\pi}{2},\\
  \displaystyle\sum_{i=1}^{n} \arctan \lambda_{i}, & \tau=\frac{\pi}{2},\\
\end{array}
\right.
$$
$a=\cot \tau, b=\sqrt{\left|\cot ^{2} \tau-1\right|}$.

If $\tau=0$, then (\ref{equation}) becomes the Monge-Amp\`ere equation $$
\det D^2u=e^{nC_0}.
$$
The classical theorem by J\"orgens \cite{Jorgens}, Calabi \cite{Calabi} and Pogorelov \cite{Pogorelov} states that any convex classical solution of $\operatorname{det}D^{2} u=1$ on $\mathbb{R}^{n}$ must be a quadratic polynomial. See Cheng-Yau \cite{ChengandYau}, Caffarelli \cite{7} and Jost-Xin \cite{JostandXin} for different proofs and extensions.
For the Monge-Amp\`ere equation in exterior domain, there are exterior J\"orgens-Calabi-Pogorelov type results by Ferrer-Mart\'{\i}nez-Mil\'{a}n \cite{FMM99} for $n=2$ and Caffarelli-Li \cite{CL}, which state that any convex solution must be asymptotic to quadratic polynomials (for $n=2$ we need additional $\ln$-term) near infinity.
There is finer result for $n\geq 3$  by Hong \cite{RemarkMA-2020}, which states that
$$u=\frac{1}{2} x^TA x+\beta \cdot x+\gamma+d( x^T A x)^{\frac{2-n}{2}}+O(|x|^{1-n})$$
as $|x|\rightarrow+\infty$
for some  $\beta\in\mathbb R^n, \gamma, d\in\mathbb R$, $A\in \mathtt{Sym}(n)$ and $\det A=1$, where
$x^T$ denote the transpose of  vector $x\in\mathbb R^n$ and $\mathtt{Sym}(n)$ denote the set of symmetric $n\times n$ matrix,

If $\tau=\frac{\pi}{2}$, then (\ref{equation}) becomes the special Lagrangian equation
\begin{equation}\label{equ-spl}
\sum_{i=1}^{n} \arctan \lambda_{i}\left(D^{2} u\right)=C_0.
\end{equation}
There are Bernstein-type results by Yuan \cite{Yu.Yuan1,Yu.Yuan2}, which state that any classical solution of \eqref{equ-spl} on $\mathbb R^n$ with either
\begin{equation}\label{equ-cond-spl}
   D^2u\geq \left\{
  \begin{array}{lll}
    -KI, & n\leq 4,\\
    -(\frac{1}{\sqrt 3}+\epsilon(n))I, & n\geq 5,\\
  \end{array}
  \right.
\end{equation}
or
\begin{equation}\label{equ-cond-spl2}
  |C_0|>\frac{n-2}{2}\pi
\end{equation}
must be a quadratic polynomial, where  $I$ denote the unit $n\times n$ matrix, $K$ is an arbitrary large constant and $\epsilon(n)$ is a small dimensional constant. For special Lagrangian equations in exterior domain, there is exterior Bernstein-type result by Li-Li-Yuan \cite{ExteriorLiouville}, which states that any classical solution of \eqref{equ-spl} on exterior domain with \eqref{equ-cond-spl} or \eqref{equ-cond-spl2} must be asymptotic to quadratic polynomial  (for $n=2$ we need additional $\ln$-term) near infinity.

If $\tau=\frac{\pi}{4}$, then \eqref{equation} is a translated inverse harmonic Hessian equation $\sum_{i=1}^n\frac{1}{\lambda_i(D^2u)}=1$, which is a special form of Hessian quotient equation. There is Bernstein-type result for Hessian quotient equations by Bao-Chen-Guan-Ji \cite{BCGJ}.

For general $\tau\in [0,\frac{\pi}{2}]$,
Warren \cite{Warren} proved the Bernstein-type results under suitable semi-convex conditions by the results of J\"orgens \cite{Jorgens}-Calabi \cite{Calabi}-Pogorelov \cite{Pogorelov}, Flanders \cite{Flanders} and Yuan \cite{Yu.Yuan1,Yu.Yuan2}.

There are also many study on asymptotic behavior and asymptotic expansions of other geometric curvature equations. Most recently, Han-Li-Li \cite{Han2019-Expansion} proved the asymptotic expansion near isolated singular point of the Yamabe equation and $\sigma_k$-Yamabe equation, refined the previous study by Caffarelli-Gidas-Spruck \cite{CGS}, Korevaar-Mazzeo-Pacard-Schoen \cite{KMPS},  Han-Li-Teixeira \cite{Han-Li-T-Simgak} etc.
The expansion near isolated singularity is related to expansion at infinity under Kelvin transform. Many other study on the asymptotics of related elliptic equations on punctured domain, exterior domain or half cylinders can be found in \cite{20,CCLin-Duke95,CCLin-CPAM97,Li-Zhang-Harnack,19,Marques-CVPDE,Li-FracHardy,
xiong-2020,
chen-liu-cylinder2020} etc.

In this paper, we obtain asymptotic expansions at infinity of classical solutions of
\begin{equation}\label{Equ-exterior-domain}
F_{\tau}(\lambda(D^2u))=C_0\quad\text{in}~\mathbb R^n\setminus\overline{B_1},
\end{equation}
where $n\geq 3$ and $B_1$ denote the unit ball centered at origin in $\mathbb R^n$.
This refines previous study including \cite{CL,ExteriorLiouville,RemarkMA-2020} etc.

Our first result focus on the asymptotic expansion at infinity of radially symmetric classical solutions of \eqref{Equ-exterior-domain}.
\begin{theorem}\label{Thm-main-radial}
  Let $u \in C^{2}\left(\mathbb{R}^{n} \backslash \overline{B_1}\right)$ be a classical radially symmetric solution of
\eqref{Equ-exterior-domain}.
Suppose either of the following holds
\begin{enumerate}[(i)]
\item \label{Case-MA} $D^2u>0$ for $\tau=0$;
\item \label{Case-small} $D^2u>(-a+b)I$ for $\tau \in (0,\frac{\pi}{4})$;

\item \label{Case-inverse} $D^2u>-I$ for $\tau=\frac{\pi}{4}$;

\item \label{Case-large} either
\begin{equation}\label{condition-temp-2}
D^2u>-(a+b)I\quad\text{and}\quad
    D^2u\geq
    \left\{
    \begin{array}{lll}
      -(a+bK)I, & n\leq 4,\\
      -(a+\frac{b}{\sqrt 3}+b\epsilon(n))I, & n\geq 5,\\
    \end{array}
    \right.
    \end{equation}
    where $K,\epsilon(n)$ are as in \eqref{equ-cond-spl}
     or
   \begin{equation}\label{condition-temp-3}
    D^2u> -(a+b)I\quad\text{and}\quad \left|\frac{bC_0}{\sqrt{a^2+1}}+\frac{n\pi}{4}\right|>\frac{n-2}{2}\pi
    \end{equation}
for $\tau\in(\frac{\pi}{4},\frac{\pi}{2})$;

\item \label{Case-SPL} either \eqref{equ-cond-spl} or \eqref{equ-cond-spl2} for $\tau=\frac{\pi}{2}$.
\end{enumerate}
Then $u$ is analytic at infinity and there exist constants $c_2,c_{-k}$ with $k=1,2,\cdots$, such that
\begin{equation}\label{Equ-expansion-radial}
  u(x)=c_2|x|^2+c_0+|x|^2\sum_{k=1}^{+\infty}c_{-k}(c|x|^{-n})^k
\end{equation}
for sufficiently large $|x|$,
where $c_0,c$ are arbitrary constants.
\end{theorem}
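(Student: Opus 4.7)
The plan is to reduce \eqref{Equ-exterior-domain} to a scalar ODE via radial symmetry, extract an explicit first integral, and convert it into an analytic expansion in $r^{-n}$ by the real-analytic implicit function theorem.

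First, writing $r=|x|$ and $u(x)=v(r)$, the Hessian $D^2u$ has eigenvalues $v''(r)$ (radial, multiplicity $1$) and $v'(r)/r$ (tangential, multiplicity $n-1$), so \eqref{Equ-exterior-domain} becomes
\[
g_\tau\bigl(v''(r)\bigr)+(n-1)\,g_\tau\bigl(v'(r)/r\bigr)=C_0,
\]
where $g_\tau$ is the per-eigenvalue summand of $F_\tau$. The structural hypotheses in (i)--(v) guarantee that $(v''(r),v'(r)/r)$ stays in the natural domain of $g_\tau$ for all $r>1$.

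The decisive step is to find an algebraic first integral in each case, using that multiplication of the ODE by $r^{n-1}$ turns it into a total derivative. For $\tau=0$ this gives directly $(v'(r))^n=e^{nC_0}r^n+nC_1$. For $\tau\in(0,\pi/4)$, setting $P(r)=v'+r(a-b)$ and $Q(r)=v'+r(a+b)$ yields $P'P^{n-1}=e^{2bC_0/\sqrt{a^2+1}}Q'Q^{n-1}$, which integrates to
\[
P(r)^n-e^{2bC_0/\sqrt{a^2+1}}Q(r)^n=nC_1.
\]
For $\tau=\pi/4$ the same substitution degenerates, but exchanging dependent and independent variables (treating $r$ as a function of $P(r)=v'+r$) turns the equation into a linear ODE in $r(P)$ with integrating factor $P^{n-1}$, leading to $r(v'+r)^{n-1}+\tfrac{C_0}{n\sqrt{2}}(v'+r)^n=C_1$. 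For $\tau\in(\pi/4,\pi/2]$ the complex identity $(\lambda+a+b)+i(\lambda+a-b)=(1+i)(\lambda+a-ib)$ shows that $\tfrac{d}{dr}[(v'+ra-irb)^n]$ has constant argument, hence
\[
\mathrm{Im}\!\left[e^{-i\Theta_\tau}\bigl(v'+ra-irb\bigr)^n\right]=nC_1
\]
for an explicit $\Theta_\tau\in\mathbb R$, reducing to $\mathrm{Im}[e^{-iC_0}(r+iv')^n]=nC_1$ when $\tau=\pi/2$.

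Dividing each first integral by $r^n$ and letting $s=v'(r)/r$ produces an implicit relation $\Phi(s)=(nC_1)\,r^{-n}$ with $\Phi$ real-analytic in $s$. I next identify the unique root $s_0$ of $\Phi(s_0)=0$ compatible with the eigenvalue/range hypothesis in (i)--(v); this is where the conditions \eqref{condition-temp-2}, \eqref{condition-temp-3}, \eqref{equ-cond-spl} and \eqref{equ-cond-spl2} enter, excluding the degenerate root $s_0=-1$ in Case (iii) and the ``wrong'' arctangent branches in Cases (iv) and (v), while simultaneously guaranteeing $\Phi'(s_0)\neq 0$. The analytic implicit function theorem then produces a convergent expansion $v'(r)/r=s_0+\sum_{k\ge 1}\alpha_k(cr^{-n})^k$ for $r\gg 1$ (with $c$ any fixed nonzero scaling), and termwise integration gives
\[
v(r)=\tfrac{s_0}{2}r^2+c_0+\sum_{k=1}^{\infty}\frac{\alpha_k c^k}{2-nk}\,r^{2-nk},
\]
which is exactly \eqref{Equ-expansion-radial} with $c_2=s_0/2$. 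The hypothesis $n\ge 3$ enters only at this last step: it forces $nk\ge 3>2$ for every $k\ge 1$, so $2-nk\neq 0$ and no $\log r$ term appears, exactly the feature that fails when $n=2$ and produces the logarithmic correction in the earlier work of Caffarelli--Li and Li--Li--Yuan. I expect the main obstacle to be the case-by-case verification that each hypothesis (i)--(v) isolates a unique root $s_0$ with $\Phi'(s_0)\neq 0$, particularly for $\tau\in(\pi/4,\pi/2]$ where several arctangent branches compete and the semi-convexity/range assumptions must be used precisely.
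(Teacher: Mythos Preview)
Your proposal is correct and follows essentially the same strategy as the paper: reduce to a radial ODE in $W=v'/r$ (or an affine shift thereof), produce a first integral of the form $G(W)=cr^{-n}$, use the structural hypotheses (i)--(v) to single out the admissible root $s_0$ with $G'(s_0)\neq 0$, invert analytically, and integrate termwise. The paper carries this out case by case in Section~2, obtaining exactly the first integrals you list; your complex-number packaging $\mathrm{Im}\bigl[e^{-i\Theta_\tau}(v'+ra-irb)^n\bigr]=nC_1$ for $\tau\in(\tfrac{\pi}{4},\tfrac{\pi}{2}]$ and the variable-exchange trick at $\tau=\tfrac{\pi}{4}$ are slicker derivations of the same identities the paper gets by separating variables and trigonometric manipulation, but the resulting implicit relations and the subsequent analysis coincide.
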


\begin{remark}
  Actually, in the process of proving Theorem \ref{Thm-main-radial}, we also study the existence of all symmetric solutions of \eqref{Equ-exterior-domain}.
\end{remark}

Our second result considers higher order expansions of general classical solution of \eqref{Equ-exterior-domain}, which gives the precise gap between exterior maximal (or minimal) gradient graph and the entire case. Hereinafter, we let $\varphi=O_l(|x|^{-k_1}(\ln|x|)^{k_2})$ with $l\in\mathbb N, k_1,k_2\geq 0$ denote $$|D^k\varphi|=O(|x|^{-k_1-k}(\ln|x|)^{k_2})
\quad\text{as}~|x|\rightarrow+\infty
$$ for all $0\leq k\leq l$. Let
$\mathcal H_k^n$ denote the $k$-order spherical harmonic function space in dimension $n$ and
$DF_{\tau}(\lambda(A))$ denotes the matrix with elements being value of partial derivative of $F_{\tau}(\lambda(M))$ w.r.t $M_{ij}$ variable at matrix $A$.

\begin{theorem}\label{Thm-main-2}
Let $u \in C^{2}\left(\mathbb{R}^{n} \backslash \overline{B_1}\right)$ be a classical solution of
\eqref{Equ-exterior-domain}.
Suppose either of \eqref{Case-MA}-\eqref{Case-SPL} holds.
Then there exist $\gamma\in\mathbb R, \beta\in\mathbb R^n, A\in\mathtt{Sym}(n)$, $F_{\tau}(\lambda(A))=C_0$ and $c_{k}(\theta)\in\mathcal H_k^n$ with $k=0,1,\cdots,n-1$  such that
  \begin{equation}
  \label{ConvergenceSpeed2}
  \begin{array}{llll}
  &\displaystyle u ( x ) - \left( \frac { 1 } { 2 } x ^ TA x + \beta \cdot x + \gamma \right)\\
  -&\displaystyle\left(
  c_0(x^T(DF_{\tau}(\lambda(A)))^{-1}x)^{\frac{2-n}{2}}
  +\sum_{k=1}^{n-1}c_k(\theta)\left(x^T(DF_{\tau}(\lambda(A)))^{-1} x\right)^{\frac{2-n-k}{2}}
  \right)\\=&O_l(|x|^{2-2n}\ln |x|)
  \end{array}
  \end{equation}
  as $|x|\rightarrow+\infty$   for all $l\in\mathbb N$, where
  \begin{equation*}
\theta=\frac{(DF_{\tau}(\lambda(A)))^{-\frac{1}{2}}x}{\left(x^T(
DF_{\tau}(\lambda(A)))^{-1}x\right)^{\frac{1}{2}}}.
\end{equation*}
\end{theorem}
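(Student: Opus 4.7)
The starting point is to establish that every such $u$ is already asymptotic to a quadratic polynomial at rate $|x|^{2-n}$: there exist $A\in\mathtt{Sym}(n)$ with $F_\tau(\lambda(A))=C_0$, $\beta\in\mathbb R^n$ and $\gamma\in\mathbb R$ such that
\[
v(x):=u(x)-\tfrac{1}{2}x^TAx-\beta\cdot x-\gamma=O_l(|x|^{2-n}).
\]
For $\tau=0$ this is Caffarelli-Li \cite{CL}; for $\tau=\pi/2$ it follows from \cite{ExteriorLiouville}; for intermediate $\tau$ one needs an analogous exterior Bernstein-type theorem under the convexity hypotheses \eqref{Case-small}--\eqref{Case-large}, provable along the lines of \cite{CL,ExteriorLiouville} using Warren's reformulation \cite{Warren}. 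The convexity assumption makes $B:=DF_\tau(\lambda(A))$ positive definite, so the linear change of coordinates $y=B^{-1/2}x$ is well defined, $|y|^2=x^TB^{-1}x$, and the linearization of $F_\tau$ at $A$ becomes the Euclidean Laplacian $\Delta_y$.

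Setting $w(y):=v(B^{1/2}y)$ and Taylor-expanding $F_\tau(\lambda(A+D^2 v))=F_\tau(\lambda(A))$ about $A$ yields
\[
\Delta_y w(y)=G(D_y^2 w(y))\qquad\text{in }\{|y|>R_0\},
\]
where $G$ is smooth with $G(0)=0$ and $DG(0)=0$, so $|G(M)|\le C|M|^2$ for small $|M|$. The iterative scheme extracts harmonic modes. Every function harmonic in the exterior of a ball and vanishing at infinity admits the expansion $\sum_{k\ge 0}c_k(\theta)|y|^{2-n-k}$ with $c_k\in\mathcal H_k^n$. Decompose $w=h+r$ with $h$ harmonic and matching $w$ suitably at infinity, and $r$ obtained from the Newtonian volume potential of $G(D_y^2 w)$ in the exterior domain. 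The bootstrap starts from $w=O_l(|y|^{2-n})$, produces a source of order $|y|^{-2n}$, improves the estimate on $r$, and allows peeling off one harmonic mode at a time. Truncating the expansion of $h$ at order $k=n-1$ gives the displayed sum in \eqref{ConvergenceSpeed2}; all higher harmonic modes decay faster than the target remainder and are absorbed.

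The hard part is the logarithmic factor in the remainder, which is not an artifact but a genuine resonance. The leading mode $c_0|y|^{2-n}$, inserted into the quadratic part of $G(D_y^2 w)$, produces a source of exact size $|y|^{-2n}$; in dimension $n$, the equation $\Delta_y v=|y|^{-2n}$ on the exterior of a ball admits a particular solution of size $|y|^{2-2n}\ln|y|$, the right-hand side being resonant with the homogeneous solution $|y|^{2-n}$. One must then verify that all cross-interactions among the already peeled-off modes $c_k(\theta)|y|^{2-n-k}$ for $0\le k\le n-1$, together with boundary contributions from the exterior Poisson kernel on $\partial B_{R_0}$, yield at worst the same $|y|^{2-2n}\ln|y|$ decay; this is the most delicate bookkeeping in the argument. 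Finally, rescaled interior Schauder estimates on dyadic annuli $\{|y|\sim R\}$ upgrade $L^\infty$ decay to the claimed $C^l$ (hence $O_l$) control for every $l$, completing the proof.
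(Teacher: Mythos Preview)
Your overall architecture matches the paper's: establish the $O_l(|x|^{2-n})$ asymptotic first, linearize at $A$, change variables by $B^{-1/2}$ to reduce to a Poisson equation, split off the harmonic part, and use Schauder on dyadic annuli for the $O_l$ upgrade. Two points, however, are genuine gaps rather than routine omissions.

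First, the intermediate cases \eqref{Case-small}--\eqref{Case-large} are not obtained ``along the lines of \cite{CL,ExteriorLiouville}'' by a formal appeal to Warren's reformulation. The paper works harder here: for $\tau\in(0,\tfrac{\pi}{4}]$ it applies a Legendre transform to $u+\tfrac{a+b}{2}|x|^2$ (resp.\ $u+\tfrac12|x|^2$), obtains a Monge--Amp\`ere type or Laplace equation with bounded Hessian in the dual variable, invokes Theorem~2.1 of \cite{ExteriorLiouville}, and then uses a ``strip argument'' to rule out degenerate limiting eigenvalues so that the transform can be inverted and $D^2u$ shown bounded; only then does the original equation become uniformly elliptic and concave so that \cite{ExteriorLiouville} applies directly. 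For $\tau\in(\tfrac{\pi}{4},\tfrac{\pi}{2})$ a different substitution reduces to the special Lagrangian case. You should not skip this.

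Second, your account of the logarithm is incorrect. The radial equation $\Delta v=|y|^{-2n}$ has the explicit particular solution $\frac{1}{2n(n-1)}|y|^{2-2n}$ with \emph{no} logarithm; there is no resonance with $|y|^{2-n}$. The $\ln|y|$ in the remainder arises in the spherical-harmonic mode $k=n$: for that mode the homogeneous solutions are $r^{n}$ and $r^{2-2n}$, and a right-hand side of size $r^{-2n}$ is resonant with the latter. Relatedly, the Newtonian potential of a source $g=O(|y|^{-2n})$ only decays like $(\int g)\,|y|^{2-n}$, not $|y|^{2-2n}$; the paper instead constructs the fast-decaying particular solution mode by mode (Lemma~\ref{Lem-existence-fastdecay}), choosing the constants in the variation-of-parameters formula so that each coefficient $a_{k,m}(r)$ decays like $r^{2-k_1}$, with the extra $\ln r$ appearing exactly when $k=k_1-n\in\mathbb N$. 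Once that solution is subtracted, the difference is harmonic and its expansion gives the $c_k(\theta)$ in one step---no iterative ``peeling'' is needed.
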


\begin{remark}
The matrix $A$ in Theorem \ref{Thm-main-2} also satisfies $A\geq 0$ in case \eqref{Case-MA}, $A\geq (-a+b)I$ in case \eqref{Case-small}, $A\geq -I$ in case \eqref{Case-inverse}, $A\geq -(a+b)I$ with \eqref{condition-temp-2} or $A>-\infty$ in case \eqref{Case-large} and \eqref{equ-cond-spl} or $A>-\infty$ in case \eqref{Case-SPL} respectively.

By comparison principle as in \cite{CL}, the global Bernstein-type results \cite{Warren} follow from these exterior behavior results in Theorem \ref{Thm-main-2}.
\end{remark}
\begin{remark}

Theorem \ref{Thm-main-2} can be extended to a more general result on classical solution of
\begin{equation}\label{Equ-fullyNonlinear-intro}
  F(D^2u)=C_0\quad\text{in}~\mathbb R^n\setminus\overline{B_1}
\end{equation}
with
\begin{equation}
  \label{ConvergenceSpeed1}
  u ( x ) - \left( \frac { 1 } { 2 } x ^ T A x + \beta \cdot x + \gamma \right)=O_l(|x|^{2-n})
  \end{equation}
for all $l\in\mathbb N$, where $F$ is smooth, $\gamma\in\mathbb R$, $\beta\in\mathbb R^n$, $A\in\mathtt{Sym}(n)$, $F(A)=C_0$ and $DF(A)>0$ . Then the same result in Theorem \ref{Thm-main-2} holds with $DF_{\tau}(\lambda (A))$ replaced by $DF(A)$. See Lemma \ref{lem-expansion-second} for more details.

By
Theorem 2.1 in \cite{ExteriorLiouville}, if $u$ is a classical solution of \eqref{Equ-fullyNonlinear-intro} with bounded Hessian matrix, where $F$ is smooth, uniformly elliptic and $\{M|F(M)=C_0\}$ is convex, then \eqref{ConvergenceSpeed1} holds.
\end{remark}
\begin{remark}
  Expansion \eqref{ConvergenceSpeed2} is optimal in the sense that the series of $k$ in \eqref{ConvergenceSpeed2} cannot be taken up to $n$ since $c_n(\theta)\not\in \mathcal H_n^n$ in general. See for instance \eqref{Equ-expansion-radial} in Theorem \ref{Thm-main-radial}.
\end{remark}

The paper is organized as follows. In section \ref{Sec-radial-Solution} we
classify radially symmetric classical solutions of \eqref{Equ-exterior-domain}
and  prove Theorem \ref{Thm-main-radial}. In section \ref{Sec-Expansion-General} we prove Theorem \ref{Thm-main-2} by finer analysis on the linearized equation, which includes the existence of ``fast decay'' solution of Poisson equations and spherical harmonic expansions of harmonic functions.

\section{Asymptotic expansions of radially symmetric solutions}\label{Sec-radial-Solution}

In this section, we calculate the asymptotic behavior of all radially symmetric classical solution of \eqref{Equ-exterior-domain} by solving the ODEs.
Since $u(x)=:U(|x|)$ is radially symmetric, the $n$ eigenvalues of $D^2u(x)$ are exactly
  \begin{equation}\label{Equ-eigenvlaues}
  \lambda_1=U''(r),~\lambda_2,\cdots,\lambda_n=\dfrac{U'(r)}{r},
  \end{equation}
  where $r=|x|>1$.

In the following, we divide this section into 5 subsections according to 5 cases.

\subsection{$\tau=0$ Case}

When $\tau=0$, equation \eqref{Equ-exterior-domain} reads
\begin{equation}\label{equ-equ-0}
\dfrac{1}{n}\sum_{i=1}^n\ln\lambda_i=C_0,\quad|x|>1.
\end{equation}
By \eqref{Equ-eigenvlaues}, \eqref{equ-equ-0} becomes
$$
U''(r)\cdot (\frac{U'(r)}{r})^{n-1}=C'\quad\text{in}~r>1,
$$
where $C':=\exp(nC_0)\in (0,+\infty)$. Let
\begin{equation}\label{equ-W}
W(r):=\frac{U'}{r}.
\end{equation}
In order to make \eqref{equ-equ-0} well-defined, $W(r)\in (0,+\infty)$ for all $r>1$. By a direct computation,
$$
rW'\cdot W^{n-1}+W^{n}=C'.
$$
This is a separable differential equation, which leads to
\begin{equation}\label{Temp-0}
W^n-C'=cr^{-n}
\end{equation}
for some constant $c$, for all $r>1$. Thus
$$
W(r)=\left(cr^{-n}+C'\right)^{\frac{1}{n}}\quad\forall~r>1.
$$
As long as $c\geq -C'$, $W(r)>0$ exists for all $r>1$ and implies
\begin{equation}\label{Equ-def-u-0}
u(x)=\int_1^{|x|}\tau(c\tau^{-n}+C')^{\frac{1}{n}}\mathtt{d}\tau+c'_0
\end{equation}
for $c_0'\in\mathbb R$. Furthermore,
\begin{equation}\label{Equ-asy-radial-0}
u(x)=\frac{1}{2}C'^{\frac{1}{n}}|x|^2+c_0+
C'^{\frac{1}{n}}|x|^2\sum_{j=1}^{+\infty}\frac{(1/n)\cdots(1/n-j+1)}{(2-nj)j!}(\frac{c|x|^{-n}}{C'})^j
\end{equation}
for $|x|>\max\{1,(\frac{|c|}{C'})^{\frac{1}{n}}\}$ and any $c_0\in\mathbb R$.

Thus in this case, we have
\begin{theorem}
  Let $u\in C^2(\mathbb R^n\setminus\overline{B_1})$ be a radially symmetric solution of \eqref{equ-equ-0}, then $u$ is given by \eqref{Equ-def-u-0}, where $c_0\in\mathbb R$ and $c\geq -e^{nC_0}$. Moreover, $u$ is analytic at infinity with expansion \eqref{Equ-asy-radial-0}.
\end{theorem}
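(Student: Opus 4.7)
The plan is to exploit radial symmetry to reduce \eqref{equ-equ-0} to a first-order separable ODE, integrate it explicitly, determine which integration constants yield a bona fide $C^2$ solution on $\{r>1\}$, and finally expand the resulting integral via a generalized binomial series at infinity.

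First I would write $u(x)=U(|x|)$ and use \eqref{Equ-eigenvlaues} to recast \eqref{equ-equ-0} as $U''(r)(U'(r)/r)^{n-1}=C'$ where $C':=e^{nC_0}>0$. Positivity of the arguments of the logarithms forces every eigenvalue to be strictly positive, so in particular $W(r):=U'(r)/r>0$ on $(1,\infty)$. Multiplying through by $r^n$ the equation reads $(r^nW^n)'=nC'r^{n-1}$, and integrating once yields $W^n=C'+cr^{-n}$ for some constant $c\in\mathbb R$. Next I would pin down the admissible values of $c$: since $C'+cr^{-n}$ is monotone in $r$ whenever $c\ne 0$, the requirement $W>0$ on $(1,\infty)$ reduces to the one-sided boundary condition $c+C'\geq 0$ at $r\to 1^+$, giving $c\geq -e^{nC_0}$. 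Solving $W=(cr^{-n}+C')^{1/n}$ and integrating $U'(r)=rW(r)$ against a free additive constant recovers the explicit formula \eqref{Equ-def-u-0}.

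For the expansion at infinity I would factor out $C'^{1/n}$ from the integrand and invoke the generalized binomial series
\[
(1+t)^{1/n}=\sum_{j=0}^{+\infty}\binom{1/n}{j}t^j,\qquad |t|<1,
\]
with $t=c\tau^{-n}/C'$, which is valid for $\tau>(|c|/C')^{1/n}$. Uniform convergence on compact subsets of this region legitimizes term-by-term integration of $\tau(1+c\tau^{-n}/C')^{1/n}$ in $\tau$; the $j=0$ term produces the leading quadratic $\tfrac{1}{2}C'^{1/n}|x|^2$, the contribution from the finite lower endpoint of integration can be absorbed into a new free constant $c_0$, and the $j$-th term ($j\geq 1$) contributes the coefficient $\frac{(1/n)(1/n-1)\cdots(1/n-j+1)}{(2-nj)j!}$ appearing in \eqref{Equ-asy-radial-0}. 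The only point requiring genuine care is the determination of the admissible range $c\geq -e^{nC_0}$ in the previous step — this is where the structural hypothesis $D^2u>0$ actually enters — whereas the series itself is automatic since $n\geq 3$ keeps every denominator $2-nj$ nonzero for $j\geq 1$, so analyticity at infinity follows directly from the explicit expansion.
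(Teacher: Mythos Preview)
Your proposal is correct and follows essentially the same route as the paper: reduce to the separable ODE for $W=U'/r$, integrate to $W^n=C'+cr^{-n}$, read off the constraint $c\ge -C'$ from positivity on $(1,\infty)$, and expand via the binomial series. Your observation $(r^nW^n)'=nC'r^{n-1}$ is just a slick repackaging of the separation step $rW'W^{n-1}+W^n=C'$ the paper writes out, and the remaining analysis (admissible $c$, term-by-term integration, absorption of the lower endpoint into $c_0$) matches the paper's argument in both content and level of detail.
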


\subsection{$\tau\in (0,\frac{\pi}{4})$ Case}

When $\tau\in (0,\frac{\pi}{4})$, equation \eqref{Equ-exterior-domain} reads
\begin{equation}\label{Equ-equ-1}
\frac{\sqrt{a^{2}+1}}{2 b} \sum_{i=1}^{n} \ln \frac{\lambda_{i}+a-b}{\lambda_{i}+a+b}=C_0,\quad|x|>1.
\end{equation}
We may assume without loss of generality that $C_0\leq 0$, otherwise we consider  $\bar u(x):=-u(x)-a|x|^2$ instead, which satisfies $\bar\lambda_i=-(\lambda_i+2a)$ and hence
$$
\frac{\sqrt{a^{2}+1}}{2 b} \sum_{i=1}^{n} \ln \frac{\bar \lambda_i+a-b}{\bar \lambda_i+a+b}=-C_0.
$$
By \eqref{Equ-eigenvlaues}, \eqref{Equ-equ-1} becomes
$$
\frac{U''(r)+a-b}{U''(r)+a+b}\cdot\left(
\frac{\frac{U'(r)}{r}+a-b}{\frac{U'(r)}{r}+a+b}
\right)^{n-1}=C'\quad\text{in}~r>1,
$$
where $C^{\prime}:=\exp \left(\frac{2 b}{\sqrt{a^{2}+1}} C_{0}\right)\in (0,1]$.
Let
\begin{equation*}
W(r):=\frac{1}{2 b}(\frac{U'(r)}{r}+a+b).
\end{equation*}
In order to make \eqref{Equ-equ-1} well-defined, $W(r)\in (-\infty,0)\cup (1,+\infty)$ for all $r>1.$
By a direct computation,
\begin{equation}\label{equ-welldefine-1}
rW'\cdot\left(C'W^{n-1}-
(W-1)^{n-1}
\right)+\left(
C'W^n-(W-1)^n
\right)=0.
\end{equation}
This is a separable differential equation, which leads to
\begin{equation}\label{Temp-1}
C'W^n(r)-(W(r)-1)^n=cr^{-n},
\end{equation}
for some constant $c$, for all $r>1$.

If $C'=1$, then $|W^n-(W-1)^n|>1$ for $W\in (-\infty,0)\cup (1,+\infty)$, which implies \eqref{Temp-1} have no solution on entire $r>1$.

If $C'\in(0,1)$ and $c=0$, then \eqref{Temp-1} admits a constant solution $W(r)\equiv \frac{1}{1-\sqrt[n]{C^{\prime}}}$, which implies  quadratic solutions $u(x)=
-\left(\frac{a+b}{2}-\frac{b}{1-\sqrt[n]{C^{\prime}}})\right)|x|^{2}+c_0
$ for all $c_0\in\mathbb R$.

If $C'\in(0,1)$ and $c\not=0$, since \eqref{Temp-1} holds for all $r>1$, we consider the inverse function $w(\xi)$ of $\xi=G(w)$ on entire $(0,c)$ or $(c,0)$, where
\begin{equation}\label{Equ-alg-1}
\left\{
\begin{array}{llll}
  G(w):=C'w^n-(w-1)^n\\
  w\in (-\infty,0)\cup(1,+\infty).\\
\end{array}
\right.
\end{equation}
See for example the following two pictures of $G(w)$.
\begin{figure}[htbp]
  \centering
  \includegraphics[width=0.45\linewidth]{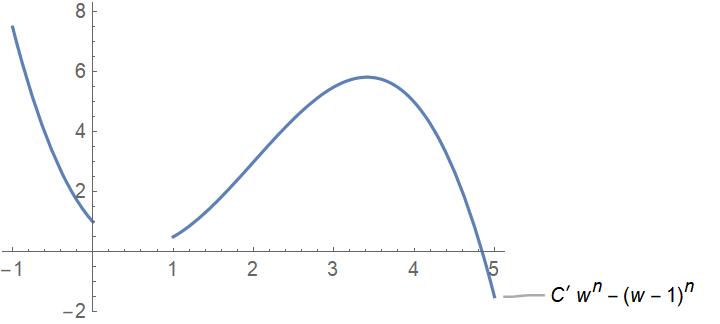}
  \includegraphics[width=0.45\linewidth]{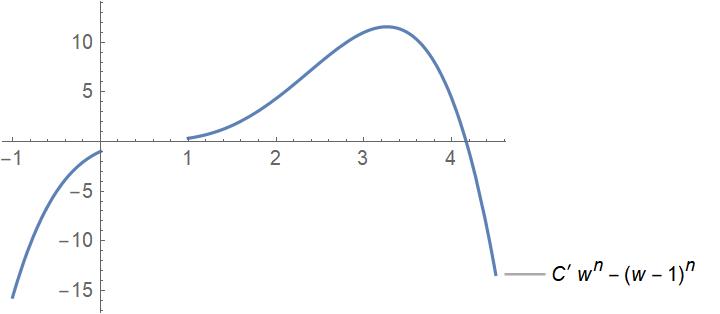}
\end{figure}

When $n$ is odd,
$$
G'(w)=n\left(C'w^{n-1}-(w-1)^{n-1}\right)\left\{
\begin{array}{lll}
  <0 & w\in (-\infty,0)\cup
  (\frac{1}{1-\sqrt[n-1]{C'}},+\infty), \\
  >0 & w\in (1,\frac{1}{1-\sqrt[n-1]{C'}}).\\
\end{array}
\right.
$$
At the endpoints, we have $G(0)=1$ with
\begin{equation}\label{equ-temp-6}
  G(1)=C'>0,\quad\text{and}\quad G\left(\frac{1}{1-\sqrt[n-1]{C^{\prime}}}\right)=
  \frac{C'}{(1-\sqrt[n-1]{C'})^{n-1}}>0,
\end{equation}
Furthermore,
  $$
  G\left(\frac{1}{1-\sqrt[n]{C^{\prime}}}\right)=0,\quad G(-\infty)=+\infty,\quad \text{and}\quad G(+\infty)=-\infty.
  $$

When $n$ is even,
$$
G'(w)\left\{
\begin{array}{lll}
  <0 & w\in
  (\frac{1}{1-\sqrt[n-1]{C'}},+\infty), \\
  >0 & w\in (-\infty,0)\cup(1,\frac{1}{1-\sqrt[n-1]{C'}}),\\
\end{array}
\right.
$$
At the endpoints, we have $G(0)=-1$ with \eqref{equ-temp-6}. Furthermore,
$$
G\left(\frac{1}{1-\sqrt[n]{C^{\prime}}}\right)
=0, \quad G(-\infty)=-\infty, \quad G(+\infty)=-\infty.
$$

Hence in the neighbourhood of origin, we have a unique inverse function $w(\xi)$ of $\xi=G(w)$ in $w\in (\frac{1}{1-\sqrt[n-1]{C'}},+\infty)$
and $\xi\in (-\infty, \frac{C^{\prime}}{\left(1-\sqrt[n-1]{C^{\prime}}\right)^{n-1}}]$.
  By \eqref{Temp-1} and the discussion above,
  \begin{equation}\label{Temp-4}
  W(r)=w(cr^{-n})\quad\forall~r>1.
  \end{equation}
  As long as $c\leq \frac{C^{\prime}}{\left(1-\sqrt[n-1]{C^{\prime}}\right)^{n-1}}$,
  $w(cr^{-n})$ exists for all $r>1$ and implies
\begin{equation}\label{Equ-def-u-1}
u(x)=-\frac{(a+b-2 b w(0))}{2}|x|^{2}+2b\int_{+\infty}^{|x|}(w(c\tau^{-n})-w(0))\cdot \tau \mathrm{d} \tau+c_{0}
\end{equation}
for $c_0\in\mathbb R$.

 Especially since $G'(\frac{1}{1-\sqrt[n]{C^{\prime}}})\not=0$, $w(\xi)$ is analytic in a neighbourhood of $\xi=0$. Hence
\begin{equation}\label{Equ-asy-radial-1}u(x)=-\frac{1}{2}(a+b-2 b w(0))|x|^{2}+c_{0}-2b|x|^{2} \sum_{j=1}^{\infty} \frac{w^{(j)}(0)}{(n j-2) j !}\left(|x|^{-n}c\right)^{j}
\end{equation}
 for sufficiently large $|x|$.

 Thus in this case, we have
\begin{theorem}
  Let $u \in C^{2}\left(\mathbb{R}^{n} \backslash \overline{B_{1}}\right)$ be a radially symmetric solution of \eqref{Equ-equ-1} with $C_0\neq 0$, then $u$ is given by
  $$
  u(x)=\frac{-a+2bw(0)+b\cdot \frac{C_0}{|C_0|}}{2}|x|^{2}+2 b\cdot\frac{C_0}{|C_0|}\int^{+\infty}_{|x|}\left(w\left(c \tau^{-n}\right)-w(0)\right) \cdot \tau \mathrm{d} \tau+c_{0},
  $$
  where  $c_0\in\mathbb{R}$, $w(\xi)$ is the inverse function of
  $$
  \exp\left(\dfrac{-2b}{\sqrt{a^2+1}}|C_0|\right)w^n-(w-1)^n=\xi\quad\text{with}\quad
  w(0)=\dfrac{1}{1-\exp\left(\frac{-2b}{n\sqrt{a^2+1}}|C_0|\right)},
  $$
  and
  $$c\leq \frac{1}{(\exp(\frac{2b}{(n-1)\sqrt{a^2+1}}|C_0|)-1)^{n-1}}.$$
 Moreover, $u$ is analytic at infinity with expansion \eqref{Equ-asy-radial-1}.

If $C_0=0$, there are no radially symmetric classical solution of \eqref{Equ-equ-1} on exterior domain.
\end{theorem}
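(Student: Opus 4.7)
The plan is to assemble the preceding subsection analysis into the claimed statement. For $C_0<0$ the proof is essentially carried out above: the substitution $W(r)=(U'(r)/r+a+b)/(2b)$ converts \eqref{Equ-equ-1} into the separable ODE \eqref{equ-welldefine-1}, whose integration yields the algebraic identity $G(W(r))=cr^{-n}$ with $G(w)=C'w^n-(w-1)^n$ and $C'=\exp(\frac{2b}{\sqrt{a^2+1}}C_0)\in(0,1)$. I would then use the monotonicity of $G$ on the admissible set $(-\infty,0)\cup(1,+\infty)$ to identify the branch containing the unique zero $w(0)=(1-\sqrt[n]{C'})^{-1}$ of $G$ (which, after comparing $\sqrt[n]{C'}$ and $\sqrt[n-1]{C'}$, is seen to lie on $(\tfrac{1}{1-\sqrt[n-1]{C'}},+\infty)$), and conclude that the inverse $w(\xi)$ extends to the interval $\xi\in(-\infty,C'/(1-\sqrt[n-1]{C'})^{n-1}]$. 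A direct substitution of $C'=e^{2bC_0/\sqrt{a^2+1}}$ then recovers the explicit constraint on $c$ stated in the theorem. The solution is reconstructed by integrating $U'(r)=r(2bW(r)-a-b)$, which gives \eqref{Equ-def-u-1}.

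For $C_0>0$ I would apply the symmetry $\bar u(x)=-u(x)-a|x|^2$ already used at the start of the subsection: since $\bar\lambda_i=-\lambda_i-2a$, the function $\bar u$ is a radial solution of the same equation with constant $-C_0<0$, so the $C_0<0$ formula applies to $\bar u$, and inverting the transformation produces the stated expression in which the factor $C_0/|C_0|$ bookkeeps the two cases uniformly. The $C_0=0$ non-existence is exactly the observation already recorded in the excerpt: $|w^n-(w-1)^n|>1$ on $(-\infty,0)\cup(1,+\infty)$, which is incompatible with $W^n-(W-1)^n=cr^{-n}\to 0$ as $r\to+\infty$.

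The analyticity at infinity and the expansion \eqref{Equ-asy-radial-1} follow from the analytic implicit function theorem applied to $G$. Since $G$ is polynomial, it suffices to verify $G'(w(0))=n[C'w(0)^{n-1}-(w(0)-1)^{n-1}]\neq 0$; this holds because $w(0)=(1-\sqrt[n]{C'})^{-1}\neq(1-\sqrt[n-1]{C'})^{-1}$ when $C'\in(0,1)$, since $\sqrt[n]{C'}\neq\sqrt[n-1]{C'}$. Thus $w$ admits a convergent Taylor expansion near $\xi=0$, and for $|x|$ sufficiently large $c|x|^{-n}$ lies in its radius of convergence. Termwise integration of $\tau\cdot w(c\tau^{-n})$ over $[|x|,+\infty)$ then yields \eqref{Equ-asy-radial-1}, with all denominators $nj-2$ nonzero by virtue of $n\geq 3$ and $j\geq 1$. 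The main bookkeeping is carefully matching signs and integration constants to combine the $C_0<0$ formula with its reflected counterpart; the only analytic ingredient with any content, the nonvanishing of $G'(w(0))$, is settled by the simple root comparison above.
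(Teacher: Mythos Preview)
Your proposal is correct and follows essentially the same approach as the paper's own treatment in the subsection leading up to this theorem: the same substitution $W=\frac{1}{2b}(U'/r+a+b)$, the same separable integration to $G(W(r))=cr^{-n}$, the same reflection $\bar u=-u-a|x|^2$ to reduce $C_0>0$ to $C_0<0$, the same non-existence argument when $C'=1$, and the same analyticity via $G'(w(0))\neq 0$. The only cosmetic difference is that the paper carries out an explicit $n$ odd/even case split (with pictures) to locate the monotone branch of $G$, whereas you compress this into the single observation that $G$ has a unique zero on the admissible set and that $w(0)$ lies to the right of the critical point $\tfrac{1}{1-\sqrt[n-1]{C'}}$; both arrive at the same inverse branch and the same range $(-\infty,\ C'/(1-\sqrt[n-1]{C'})^{n-1}]$ for $\xi$.
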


Under  condition \eqref{Case-small},
$$
\ln \frac{\lambda_{i}+a-b}{\lambda_{i}+a+b}<0\quad\forall~i=1,2,\cdots,n,
$$
hence \eqref{Equ-equ-1} implies $C_0<0$ and then radially symmetric classical solutions are given by \eqref{Equ-def-u-1}.

\subsection{$\tau=\frac{\pi}{4}$ Case}

When $\tau=\frac{\pi}{4}$, equation \eqref{Equ-exterior-domain} reads
\begin{equation}\label{equ-equ-2}
-\sqrt{2} \sum_{i=1}^{n} \frac{1}{1+\lambda_{i}}=C_0,\quad|x|>1.
\end{equation}
Let
$$
W(r):=\dfrac{U'(r)}{r}+1.
$$
In order to make \eqref{equ-equ-2} well-defined, $W(r)\in (-\infty,0)\cup(0,+\infty)$ for all $r>1$.
By a direct computation,
\begin{equation}\label{equ-welldefine-2}
(n-1-C'W)rW'+nW-C'W^2=0,
\end{equation}
where $C':=-\frac{C_0}{\sqrt 2}\in\mathbb R$.

When $C'=0$,  \eqref{equ-welldefine-2} leads to
$$
W(r)=cr^{-\frac{n}{n-1}},
$$
for some constant $c$, for all $r>1$. As long as $c\not=0$, $W(r)\not=0$ for all $r>1.$ Thus in this case,
\begin{equation}\label{Equ-asy-radial-4}
u(x)=-\frac{1}{2}|x|^2+\dfrac{n-1}{n-2}c|x|^{-\frac{n-2}{n-1}}+c_0.
\end{equation}

When $C'\not=0$, we may assume without loss of generality that $C'=1$, otherwise we consider
$
W(r):=\frac{1}{C'}(\frac{U'(r)}{r}+1)
$ instead. In this case, \eqref{equ-welldefine-2} is a separable differential equation that leads to
\begin{equation}\label{Temp-5}
W^n(r)-nW^{n-1}(r)=cr^{-n},
\end{equation}
for some constant $c$, for all $r>1$.

If $c=0$, then  \eqref{Temp-5} admits a constant solution $W(r)\equiv n$, which implies quadratic solutions $u(x)=-\frac{1}{2}\left(1-nC'\right)|x|^{2}+c_0$ for all $c_0\in\mathbb R$.

If $c\not=0$, since \eqref{Temp-5} holds for all $r>1$, we consider the inverse function $w(\xi)$ of $\xi=G(w)$ on entire $(0,c)$ or $(c,0)$, where
\begin{equation}\label{Equ-alg-2}
\left\{
\begin{array}{lll}
  G(w):=w^n-nw^{n-1}\\
  w\in (-\infty,0)\cup(0,+\infty).\\
\end{array}
\right.
\end{equation}
See for example the following two pictures of $G(w)$.
\begin{figure}[htbp]
  \centering
  \includegraphics[width=0.4\linewidth]{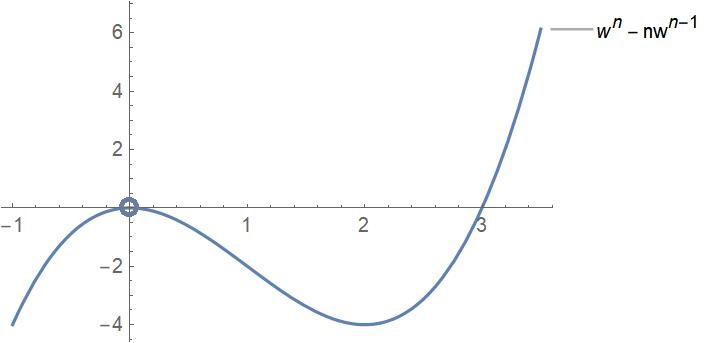}
  \includegraphics[width=0.4\linewidth]{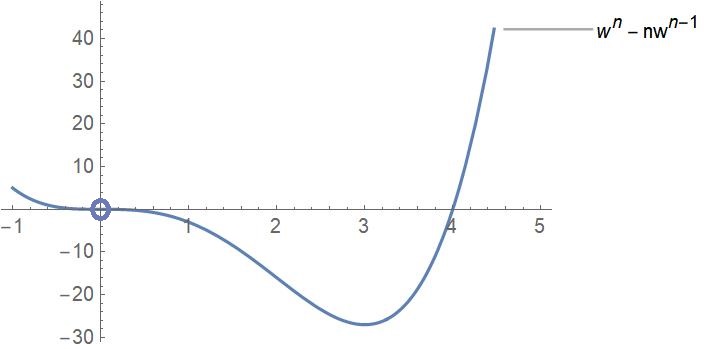}
\end{figure}

When $n$ is odd,
$$
G'(w)=nw^{n-1}-n(n-1)w^{n-2}\left\{
\begin{array}{llll}
  >0 & w\in (-\infty,0)\cup (n-1,+\infty),\\
  <0 & w\in (0,n-1).
\end{array}
\right.
$$
At the end points,
  \begin{equation}\label{Equ-criticalpt-2}
  G(0)=0,~G(n-1)=-(n-1)^{n-1}<0.
  \end{equation}
  Furthermore,
  $$
  G(n)=0,~G(-\infty)=-\infty,~\text{and}~G(+\infty)=+\infty.
  $$
  Since there are three monotone domain with range being a half-neighbourhood of origin, we have three inverse functions  $w_1(\xi),w_2(\xi),w_3(\xi)$ of $\xi=G(w)$. They exist in $\xi\in (-(n-1)^{n-1},+\infty)$, $\xi\in (-(n-1)^{n-1},0)$,  $\xi\in (-\infty,0)$ with $w_1(\xi)\in (n-1,+\infty)$, $w_2(\xi)\in (0,n-1)$,
  $w_3(\xi)\in (-\infty,0)$ respectively.

When $n$ is even,
$$
G'(w)\left\{
\begin{array}{lll}
  >0 & w\in (n-1,+\infty),\\
  <0 & w\in (-\infty,0)\cup(0,n-1).\\
\end{array}
\right.
$$
At the end points, we still have \eqref{Equ-criticalpt-2}. Furthermore,
  $$
  G(n)=0,\quad G(-\infty)=+\infty\quad\text {and}\quad G(+\infty)=+\infty.
  $$
Similarly, we have three inverse functions
 $w_1(\xi),w_2(\xi),w_3(\xi)$ exist in $\xi\in (-(n-1)^{n-1},+\infty)$, $\xi\in (-(n-1)^{n-1},0)$,  $\xi\in (0,+\infty)$ with $w_1(\xi)\in (n-1,+\infty)$, $w_2(\xi)\in (0,n-1)$,
 $w_3(\xi)\in (-\infty,0)$ respectively.

By \eqref{Temp-5} and the discussion above,
$$
W(r)=w_p(cr^{-n})\quad\forall~r>1,
$$
for some $p\in\{1,2,3\}$. When $c\geq -(n-1)^{n-1}$ or $0>c\geq -(n-1)^{n-1}$ or $c>0$,  $w_p(cr^{-n})$ exists for all $r>1$ with $p\in\{1,2,3\}$ respectively and implies
\begin{equation}\label{Equ-def-u-2}
u(x)=-\dfrac{1}{2}\left(1-C'w_p(0)\right)|x|^2-C'\int_{+\infty}^{|x|}\left(-w_p(c\tau^{-n})+w_p(0)
\right)\cdot \tau\mathtt{d}\tau+c_0
\end{equation}
for $c_0\in\mathbb R.$

Especially for $p=1$, $G'(n)>0$ and hence $w_1(\xi)$ is analytic in an neighbourhood of origin. Hence
\begin{equation}\label{Equ-asy-radial-2}
u(x)=-\dfrac{1}{2}\left(1-C'w_1(0)\right)|x|^2+c_0-C'|x|^2\sum_{j=1}^{\infty}\frac{w_1^{(j)}(0)}{(n j-2) j !}\left(|x|^{-n}c\right)^{j}
\end{equation}
for sufficiently large $|x|$. For $p=2,3$, we prove that the inverse functions $w_p(\xi)$ are not analytic in a neighbourhood of $\xi=0$. By contradiction, suppose $w(\xi)=\sum_{j=j_0}^{+\infty}c_j\xi^j$ in a neighbourhood of origin with $c_{j_0}\not=0$ and $j_0\geq 1$. Then
$$
G(w(\xi))=\left(\sum_{j=j_0}^{+\infty}c_j\xi^j\right)^n-n\left(\sum_{j=j_0}^{+\infty}c_j\xi^j\right)^{n-1}
=-nc_{j_0}^{n-1}\xi^{j_0(n-1)}+o(\xi^{j_0(n-1)})=O(\xi^{n-1}),
$$
contradicting to $G(w(\xi))=\xi$.

Thus in this case, we have
\begin{theorem}
  Let $u \in C^{2}\left(\mathbb{R}^{n} \backslash \overline{B_{1}}\right)$ be a radially symmetric  solution of \eqref{equ-equ-2} with $C_0\not=0$, then $u$ is given by  \eqref{Equ-def-u-2} where  $c_0\in\mathbb{R}$ and  $c\in[-(n-1)^{n-1},+\infty)$ or
$[-(n-1)^{n-1},0)$ or $(-\infty,0)$ respectively  for $p\in\{1,2,3\}$. Moreover, if $p=1$, then $u$ is analytic at infinity with expansion \eqref{Equ-asy-radial-2}.

Let $u \in C^{2}\left(\mathbb{R}^{n} \backslash \overline{B_{1}}\right)$ be a radially symmetric  solution of \eqref{equ-equ-2} with $C_0=0$, then $u$ is given by \eqref{Equ-asy-radial-4}, where $c_0\in\mathbb R$ and $c\in\mathbb R\setminus\{0\}$.
\end{theorem}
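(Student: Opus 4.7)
The plan is to reduce the fully nonlinear problem to a first-order ODE in the auxiliary function $W(r) := U'(r)/r + 1$, classify all solutions of that ODE through the algebraic first integral $W^n - n W^{n-1} = c r^{-n}$, and then distinguish the analytic from the non-analytic branches at infinity according to whether the implicit function theorem can be applied at the base value of $W$.

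First I would substitute the radial expressions \eqref{Equ-eigenvlaues} into \eqref{equ-equ-2}; a short computation turns it into a scalar ODE relating $W$, $rW'$ and the constant $C' := -C_0/\sqrt 2$, which is exactly \eqref{equ-welldefine-2}. For $C_0 = 0$ the ODE is homogeneous and separates immediately to $W(r) = c r^{-n/(n-1)}$, the well-definedness requirement $W \neq 0$ forces $c \in \mathbb R \setminus \{0\}$, and a single integration yields \eqref{Equ-asy-radial-4}. For $C_0 \neq 0$ I would rescale $W$ so that $C' = 1$, observe that the resulting ODE is separable, and integrate to the algebraic first integral $G(W(r)) = c r^{-n}$ with $G(w) := w^n - n w^{n-1}$.

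The next step is a case-by-case analysis of $G$. Differentiating, $G'(w) = n w^{n-2}(w - (n-1))$ has critical points at $w = 0$ and $w = n-1$, and splits $\mathbb R \setminus \{0\}$ into the three monotone pieces $(-\infty, 0)$, $(0, n-1)$, $(n-1, +\infty)$, each carrying a real inverse $w_p(\xi)$, $p \in \{1,2,3\}$. The list of admissible $c$-ranges in the theorem is then the precise condition that the segment with endpoints $0$ and $c$ lies in the domain of $w_p$, guaranteeing that $W(r) = w_p(c r^{-n})$ is defined for every $r > 1$ without crossing a critical value. Integrating $U'(r) = r(W - 1)$ from infinity produces formula \eqref{Equ-def-u-2}, with $c_0$ the constant of integration; the parity split for odd vs.\ even $n$ has to be tracked because it changes the sign at $\pm\infty$, but not the structure of the three branches.

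The last step is the analyticity dichotomy at infinity. For $p = 1$ the base value is $w_1(0) = n$, which is a simple zero of $G$ (since $G'(n) = n \cdot n^{n-2} \neq 0$), so the implicit function theorem gives a convergent Taylor series of $w_1$ at $0$, and termwise integration yields \eqref{Equ-asy-radial-2}. For $p \in \{2,3\}$ the base value is $w_p(0) = 0$, and the main obstacle is showing the inverse is not analytic there despite the algebraic equation being polynomial. The plan is a short order-counting contradiction: assume $w_p(\xi) = \sum_{j \ge j_0} c_j \xi^j$ with $c_{j_0} \neq 0$ and $j_0 \ge 1$, substitute into $G(w_p(\xi)) = \xi$, and note that the lowest-order term of $w_p^n - n w_p^{n-1}$ is $-n c_{j_0}^{n-1} \xi^{j_0(n-1)}$ of order at least $n-1 \ge 2$, contradicting the right-hand side $\xi$. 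Collecting the three branches and the $C_0 = 0$ case then assembles the theorem.
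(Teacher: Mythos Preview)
Your proposal is correct and follows essentially the same route as the paper: the auxiliary variable $W=U'/r+1$, the reduction to \eqref{equ-welldefine-2}, the separate treatment of $C'=0$ versus the rescaled $C'=1$ case with first integral $G(W)=W^n-nW^{n-1}=cr^{-n}$, the three monotone branches of $G$ with the parity bookkeeping, analyticity of $w_1$ at $0$ via $G'(n)\neq 0$, and the same order-counting contradiction to rule out analyticity of $w_2,w_3$ at $0$. The only cosmetic difference is that the paper spells out the odd/even cases explicitly whereas you note (correctly) that parity only affects the signs at $\pm\infty$ and not the branch structure.
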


Under condition \eqref{Case-inverse},
$$
\dfrac{1}{1+\lambda_i}>0\quad\forall~i=1,2,\cdots,n,
$$
hence \eqref{equ-equ-2} implies $C_0<0$. Furthermore, by \eqref{Equ-eigenvlaues}, $W>0$ and $rW'+W>0$ for all $r>1$. Then \eqref{equ-equ-2} implies
$\frac{n-1}{W}+\frac{1}{rW'+W}=C'$ and hence $W>(n-1)C'$. Thus in this case,  $p=1$ in  \eqref{Equ-def-u-2} and $u$ is analytic at infinity.

\subsection{$\tau\in (\frac{\pi}{4},\frac{\pi}{2})$ Case}

When $\tau\in (\frac{\pi}{4},\frac{\pi}{2})$,  equation \eqref{Equ-exterior-domain} reads
\begin{equation}\label{equ-equ-3}
 \frac{\sqrt{a^{2}+1}}{b} \sum_{i=1}^{n} \arctan \frac{\lambda_{i}+a-b}{\lambda_{i}+a+b}=C_0,\quad|x|>1.
\end{equation}
By \eqref{Equ-eigenvlaues}, \eqref{equ-equ-3} becomes
  \begin{equation*}\arctan \frac{U^{\prime \prime}(r)+a-b}{U^{\prime \prime}(r)+a+b}+(n-1) \arctan \frac{\frac{U^{\prime}(r)}{r}+a-b}{\frac{U^{\prime}(r)}{r}+a+b}=C'\quad\text{in}~r>1,
  \end{equation*}
  where $C':=\frac{b}{\sqrt{a^{2}+1}} C_{0}\in (-\frac{n}{2}\pi,\frac{n}{2}\pi)$.
  Let
  \begin{equation*}
  W(r):=\frac{\frac{U'(r)}{r}+a}{b}.
  \end{equation*}
  In order to make \eqref{equ-equ-3} well-defined, $W(r)\not=-1$ for all $r>1$.
  By a direct computation,
  \begin{equation}\label{equ-welldefine-3}
  \arctan \dfrac{W+rW'-1}{W+rW'+1}+(n-1)\arctan \dfrac{W-1}{W+1}=C',
  \end{equation}
  i.e.,
  \begin{equation}\label{equ-temp-4}
  W+rW'=\dfrac{1+\tan\Theta(W)}{1-\tan\Theta(W)}=\tan (\frac{\pi}{4}+\Theta(W))\quad\text{in}~r>1,
  \end{equation}
  where
  $$
  \Theta(w):=C'-(n-1) \arctan \frac{w-1}{w+1}\in (-\frac{\pi}{2},\frac{\pi}{2})\setminus\{\frac{\pi}{4}\}.
  $$
  Since \eqref{equ-temp-4} is a separable differential equation, by a direct computation, it leads to
  \begin{equation}\label{Equ-SPL-radial-1}
  (W^2+1)^{\frac{n-1}{2}}(W\cos(\frac{\pi}{4}+\Theta(W))-
  \sin(\frac{\pi}{4}+\Theta(W)))=cr^{-n}
  \end{equation}
  for some constant $c$, for all $r>1$.

  If $c=0$, then \eqref{Equ-SPL-radial-1} admits a constant solution $W(r)\equiv \tan \left(\frac{\pi}{4}+\frac{C'}{n}\right)$, which implies  quadratic solutions
  $u(x)=-\frac{1}{2}\left(a-b \tan \left(\frac{\pi}{4}+\frac{C'}{n}\right)\right)|x|^{2}+c_0$ for all $c_0\in\mathbb R$.

  If $c\not=0$,
  since \eqref{Equ-SPL-radial-1} holds for all $r>1$, we consider the inverse function $w(\xi)$ of $\xi=G(w)$  on entire $(0,c)$ or $(c,0)$, where
  \begin{equation}\label{equ-sol}
  \left\{
  \begin{array}{llll}
    G(w):=(w^2+1)^{\frac{n-1}{2}}(w\cos(\frac{\pi}{4}+\Theta(w))-\sin(\frac{\pi}{4}+\Theta(w))),\\
    \Theta(w)\in (-\frac{\pi}{2},\frac{\pi}{2})\setminus\{\frac{\pi}{4}\},\quad w\not=-1.\\
  \end{array}
  \right.
  \end{equation}
  By a direct computation,
  \begin{equation}\label{equ-deri-2}
  G'(w)=n(w^2+1)^{\frac{n-1}{2}}\cos(\frac{\pi}{4}+\Theta(w)).
  \end{equation}

  Firstly, we consider the case that $G(w)=0$ is solvable, i.e.,
  $$
  \tan (\frac{\pi}{4}+\Theta(w))=w.
  $$
See for instance the following two graphs of $\xi=\arctan w-\frac{\pi}{4}$, $\xi=\arctan w+\frac{3}{4}\pi$ with $\xi=C'-(n-1)\arctan \frac{w-1}{w+1}$ has a unique intersection in the range of $\xi\in(-\frac{\pi}{2},\frac{\pi}{2})$.
\begin{figure}[htbp]
  \centering
  \includegraphics[width=0.45\linewidth]{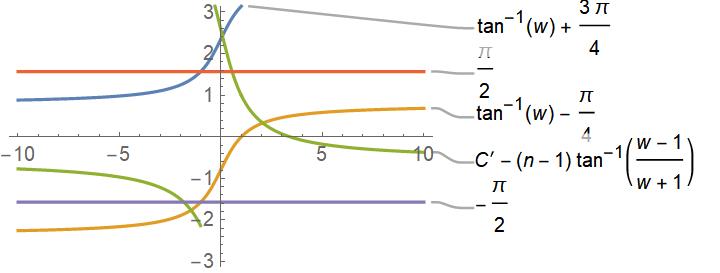}
  \includegraphics[width=0.45\linewidth]{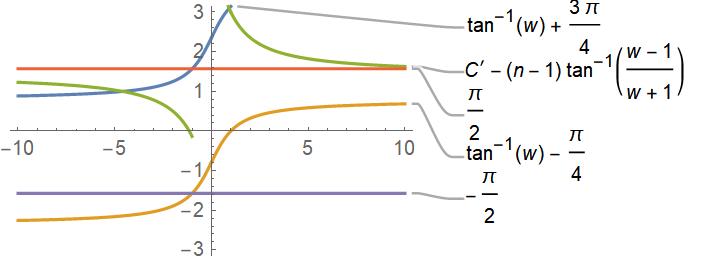}
\end{figure}

For the case of $\arctan w-\frac{\pi}{4}=\Theta(w)$, we have $\arctan w\in (-\frac{\pi}{4},\frac{\pi}{2})$ and thus $w>-1$. By
identity
\begin{equation}\label{equ-identity-1}
\arctan \dfrac{w-1}{w+1}=\arctan w-\frac{\pi}{4}\quad\forall~w>-1,
\end{equation}
(see for instance \cite{huang2019entire,Warren}) the equation becomes
$$
\arctan w-\frac{\pi}{4}=C'-(n-1)\arctan w+\frac{n-1}{4}\pi,
$$
which has a root
$$
w_1(0):=\tan\left(\frac{\pi}{4}+\frac{C'}{n}\right)>-1
$$
if
$-\frac{n}{2}\pi<C'<\frac{n}{4}\pi$. By \eqref{equ-deri-2},
$G(w)$ is monotone increasing as long as
$$
-\frac{\pi}{2}<\frac{\pi}{4}+\Theta(w)<\frac{\pi}{2},
$$
which include the following connected neighbourhood of $w_1(0)>-1$,
$$
\left\{
\begin{array}{llll}
  \left(-1,\tan\left(\frac{n+1}{4n-4}\pi+\frac{C'}{n-1}\right)\right), &
  \text{if }-\frac{n}{2}\pi<C'<-\frac{n}{2}\pi+\frac{3}{4}\pi,
  \\
  \left(
  \tan\left(\frac{n-2}{4n-4}\pi+\frac{C'}{n-1}\right),
  \tan\left(\frac{n+1}{4n-4}\pi+\frac{C'}{n-1}\right)
  \right), & \text{if }
  -\frac{n}{2}\pi+\frac{3}{4}\pi\leq C'\leq \frac{n-1}{4} \pi,
  \\
  \left(
   \tan\left(\frac{n-2}{4n-4}\pi+\frac{C'}{n-1}\right),+\infty
  \right), & \text{if }\frac{n-1}{4} \pi<C'<\frac{n}{4}\pi.\\
\end{array}
\right.
$$

For the case of $\arctan w+\frac{3}{4}\pi=\Theta(w)$,
we have $\arctan w\in (-\frac{\pi}{2},-\frac{\pi}{4})$ and thus $w<-1$. By identity
\begin{equation}\label{equ-identity-2}
\arctan\dfrac{w-1}{w+1}=\arctan w+\frac{3}{4}\pi\quad\forall~ w<-1,
\end{equation}
the equation becomes
$$
\arctan w+\frac{3}{4}\pi=C'-(n-1)\arctan w-\frac{3(n-1)}{4}\pi,
$$
which has a root
$$
w_1(0)=\tan \left(  \frac{1}{4}\pi+\frac{C'}{n}\right)<-1
$$
if $\frac{n}{4}\pi<C'<\frac{n}{2}\pi$. By \eqref{equ-deri-2}, $G(w)$ is monotone decreasing  as long as
$$
\frac{\pi}{2}<\frac{\pi}{4}+\Theta(w)<\frac{3}{4}\pi,
$$
which include the following connected neighbourhood of $w_1(0)<-1$,
$$
\left\{
\begin{array}{llll}
  \left(-\infty,\tan\left(-\frac{3n-2}{4n-4}\pi+\frac{C'}{n-1}\right)\right), &
  \text{if }\frac{n}{4}\pi<C'<\frac{n+1}{4}\pi,\\
  \left(
  \tan\left(-\frac{3n-1}{4n-4}\pi+\frac{C'}{n-1}\right),
  \tan\left(-\frac{3n-2}{4n-4}\pi+\frac{C'}{n-1}\right)
  \right), & \text{if }
  \frac{n+1}{4}\pi\leq C'\leq \frac{n}{2}\pi-\frac{1}{4}\pi
  ,\\
  \left(\tan\left(-\frac{3n-1}{4n-4}\pi+\frac{C'}{n-1}\right),-1
  \right), &
  \text{if }\frac{n}{2}\pi-\frac{1}{4}\pi<C'<\frac{n}{2}\pi.
  \\
\end{array}
\right.
$$

In these  cases, $\xi=G(w)$ admits a unique inverse function $w_1(\xi)$ in  $(\Xi_1,\Xi_2)$, where
$$
0>\Xi_1:=\left\{
\begin{array}{llllll}
-2^{\frac{n}{2}}\sin\left(\frac{n}{2}\pi+C'\right), & -\frac{n}{2} \pi<C'<-\frac{n}{2} \pi+\frac{3}{4} \pi,\\
-\left|\sec \left(\frac{n-2}{4(n-1)}\pi+\frac{C'}{n-1}\right)\right|^{n-1}, & -\frac{n}{2} \pi+\frac{3}{4} \pi \leq C'<\frac{n}{4}\pi,\\
-\left|\sec\left(-\frac{3n-2}{4(n-1)}\pi+\frac{C'}{n-1}\right)\right|^{n-1}, & \frac{n}{4} \pi<C'<\frac{n}{2} \pi-\frac{1}{4}\pi,\\
-2^{\frac{n}{2}}\sin\left(-\frac{n-2}{2}\pi+C'\right),& \frac{n}{2} \pi-\frac{1}{4} \pi<C'<\frac{n}{2} \pi,
\end{array}
\right.
$$
and
$0<\Xi_2:=$
$$
\left\{
\begin{array}{llll}
  \frac{\sqrt{2}}{2}\left|\sec\left(
  \frac{n+1}{4(n-1)} \pi+\frac{C'}{n-1}
  \right)\right|^{n-1}\left(\tan\left(
  \frac{n+1}{4(n-1)} \pi+\frac{C'}{n-1}
  \right)+1\right), & -\frac{n}{2}\pi<C'<\frac{n-1}{4} \pi,\\
  +\infty, & \frac{n-1}{4}\pi\leq C'< \frac{n}{4}\pi,\\
  +\infty, & \frac{n}{4}\pi<C'\leq \frac{n+1}{4}\pi,\\
  -\frac{\sqrt{2}}{2}\left|\sec\left(
  -\frac{3n-1}{4(n-1)}\pi+\frac{C'}{n-1}\right)
  \right|^{n-1}\left(
  \tan\left(
  -\frac{3n-1}{4(n-1)}\pi+\frac{C'}{n-1}
  \right)+1
  \right), & \frac{n+1}{4} \pi< C'<\frac{n}{2}\pi.\\
\end{array}
\right.
$$

Secondly, we consider the case where $G(w)$ converge to $0$ as $w$ goes to infinity or the singular point $w=-1$, which is similar to the $p=2,3$ cases in \eqref{Case-inverse} where $G(w)=\xi$ admits an inverse function on half-neighbourhood of $\xi=0$.

As $w\rightarrow\pm\infty$,
$$
\frac{\pi}{4}+\Theta(w)\rightarrow C'-\frac{n}{4}\pi+\frac{\pi}{2}.
$$
Since $\cos(C'-\frac{n}{4}\pi+\frac{\pi}{2})$ and $\sin(C'-\frac{n}{4}\pi+\frac{\pi}{2})$ are bounded and cannot be zero at the same time, $G(w)$ cannot converge to $0$ as $w$ goes to infinity.

As $w\rightarrow -1$, we have
$$
\lim_{w\rightarrow -1^+}\frac{\pi}{4}+\Theta(w)=C'+\frac{n}{2}\pi-\frac{\pi}{4}\quad\text{and}\quad
\lim_{w\rightarrow -1^-}\frac{\pi}{4}+\Theta(w)=C'-\frac{n}{2}\pi+\frac{3\pi}{4}.
$$
Thus
$$
\lim_{w\rightarrow -1^+}G(w)=2^{\frac{n-1}{2}}(-\cos(C'+\frac{n}{2}\pi-\frac{\pi}{4})
-\sin(C'+\frac{n}{2}\pi-\frac{\pi}{4})),
$$
$$\lim_{w\rightarrow -1^-}G(w)=2^{\frac{n-1}{2}}(-\cos(C'-\frac{n}{2}\pi+\frac{3\pi}{4})
-\sin(C'-\frac{n}{2}\pi+\frac{3\pi}{4})),
$$
and
the only possible cases are $C'=-\frac{n-2}{2}\pi$ with $w>w_2(0):=-1$ and
$C'=\frac{n-2}{2}\pi$ with $w<w_2(0)=-1$ such that the limit becomes zero respectively.
For the first case, by \eqref{equ-deri-2}, $G(w)$ is monotone decreasing in
\begin{equation*}
\left(-1, \tan \left(-\frac{n-2}{4(n-1)} \pi\right)\right).
\end{equation*}
For the second case, by \eqref{equ-deri-2}, $G(w)$ is monotone increasing in
\begin{equation*}
\left\{\begin{array}{ll}
(-\infty,-1), & n=3,4 \\
\left(\tan \left(-\frac{n+2}{4(n-1)} \pi\right),-1\right), & n \geq 5.
\end{array}\right.
\end{equation*}
These imply inverse functions $w_2(\xi)$ of $\xi=G(w)$ in $(0,\Xi_3)$ or $(\Xi_4,0)$ when $C'=\mp\frac{n-2}{2}\pi$ respectively, where
$$
\Xi_3:=\left|\sec \left(\frac{n-2}{4(n-1)} \pi\right)\right|^{n-1},\quad\text{and}\quad
\Xi_4:=\left\{
\begin{array}{lllll}
-\infty, & n=3,4,\\
-\left|\sec \left(\frac{n+2}{4(n-1)} \pi\right)\right|^{n-1}, & n\geq 5.\\
\end{array}
\right.
$$

By \eqref{Equ-SPL-radial-1} and the discussion above,
$$
W(r)=w_p(cr^{-n})\quad\forall~ r>1,
$$
for some $p\in\{1,2\}$ and implies
\begin{equation}\label{Equ-def-u-3}
  u(x)=-\frac{1}{2}(a-b w_{p}(0))|x|^{2}+b\int_{+\infty}^{|x|}(bw_p(c\tau^{-n})-w_p(0)
  )\cdot\tau\mathtt{d} \tau+c_0
\end{equation}
for $c_0\in\mathbb R$.

Furthermore, $G'(w_p(0))\not=0$ and hence  $w_p(\xi)$ are analytic in a neighbourhood of origin.  Thus
\begin{equation}\label{Equ-asy-radial-3}
  u=-\dfrac{1}{2}\left(a-bw_{p}(0)\right)|x|^2+c_0-b|x|^2\sum_{j=1}^{\infty}
  \dfrac{w_p^{(j)}(0)}{(nj-2)j!}(|x|^{-n}c)^j
\end{equation}
for sufficiently large $|x|$ and $p\in \{1,2\}$.

Thus in this case, we have
\begin{theorem}
  Let $u \in C^{2}\left(\mathbb{R}^{n} \backslash \overline{B_{1}}\right)$ be a radially symmetric  solution of \eqref{equ-equ-3}, then $u$ is given by  \eqref{Equ-def-u-3}, where  $c_0\in\mathbb R$, $c\in [\Xi_1,\Xi_2]$ or  $\left(0,\Xi_3\right]$/$\left[\Xi_4,0\right)$ for $p=1,2$ respectively.
Moreover, $u$ is analytic at infinity with expansion \eqref{Equ-asy-radial-3}.
\end{theorem}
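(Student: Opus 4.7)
The plan is to carry out the analysis sketched in the preceding discussion in a systematic way. First, I would reduce (\ref{equ-equ-3}) to an ODE by plugging the eigenvalue formula (\ref{Equ-eigenvlaues}) into the PDE and introducing $W(r) := (U'(r)/r + a)/b$, so that $U''/b + a/b = W + rW'$. This turns the PDE into (\ref{equ-welldefine-3}), which is equivalent to (\ref{equ-temp-4}) after taking tangents. Being separable in $W$ and $r$, it integrates to the conservation law (\ref{Equ-SPL-radial-1}), i.e., $G(W(r)) = c r^{-n}$ for some $c \in \mathbb{R}$, where $G$ is defined by (\ref{equ-sol}).

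Next, I would split into the subcases $c = 0$ and $c \neq 0$. In the first, $W(r)$ is a constant root of $G$, yielding a quadratic polynomial, which is the $c=0$ instance of (\ref{Equ-def-u-3}). In the second, one must invert $\xi = G(w)$ on a one-sided neighborhood of $0$. Using the monotonicity provided by (\ref{equ-deri-2}), I would identify two families of inverse branches: the branch $w_1$ arising from roots of $G(w) = 0$ (which exist for all $C' \in (-n\pi/2, n\pi/2) \setminus \{n\pi/4\}$, obtained via the identities (\ref{equ-identity-1}) and (\ref{equ-identity-2})), and the branch $w_2$ arising from the endpoint $w \to -1$ in the degenerate cases $C' = \pm (n-2)\pi/2$. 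In each case, I would determine the maximal interval of $\xi$ on which a monotone branch exists by locating the nearest zeros of $G'$ (equivalently, the nearest zeros of $\cos(\pi/4 + \Theta(w))$) and evaluating $G$ there; these intervals are exactly $[\Xi_1, \Xi_2]$, $(0, \Xi_3]$ and $[\Xi_4, 0)$ described before the statement.

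Finally, I would recover $u$ from $W$ by integrating $U'(r) = rbW(r) - ar = rb\, w_p(cr^{-n}) - ar$, which yields (\ref{Equ-def-u-3}) with integration constant $c_0$. For the expansion, the key observation is that $G'(w_p(0)) \neq 0$: for $w_1(0)$ this follows because $\cos(\pi/4 + \Theta(w_1(0))) = \cos((n+1)C'/n - (n-1)\pi/4 \cdot \text{sign})$ does not vanish at the identified roots, and for $w_2(0)$ one checks directly at $w=-1$ in the degenerate cases. By the analytic inverse function theorem, $w_p$ is analytic in a neighborhood of $\xi = 0$; expanding it as a power series in $\xi = cr^{-n}$ and integrating term-by-term produces the series (\ref{Equ-asy-radial-3}), which converges for $|x|$ large enough that $|c||x|^{-n}$ lies inside the radius of convergence.

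The main obstacle is not any single step but the book-keeping in Step~2: namely, verifying that the intervals listed as $\Xi_1, \Xi_2, \Xi_3, \Xi_4$ are precisely the sharp ranges of $c$ for which $cr^{-n}$ remains in the domain of $w_p$ for all $r > 1$. This requires a careful case analysis in $C'$ against the threshold values $\pm (n\pm 1)\pi/4$ and $\pm (n-2)\pi/2$ arising from the trigonometric structure of $G$, with particular care near $w = -1$ (where $G$ generically has a finite non-zero limit, but vanishes precisely when $C' = \pm(n-2)\pi/2$) and as $w \to \pm\infty$ (where $G$ cannot vanish since $\cos$ and $\sin$ of the limiting angle are not simultaneously zero). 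Once these endpoint computations are organized, the remaining claims of the theorem follow directly from the preceding derivation.
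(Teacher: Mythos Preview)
Your proposal is correct and follows essentially the same approach as the paper: the reduction to the separable ODE via $W$, the conservation law $G(W)=cr^{-n}$, the case split $c=0$ versus $c\neq 0$, the identification of the two inverse branches $w_1,w_2$ through the identities (\ref{equ-identity-1})--(\ref{equ-identity-2}) and the endpoint analysis at $w=-1$, and the analyticity from $G'(w_p(0))\neq 0$ all mirror the paper's argument. The main content you flag as the obstacle---the case-by-case computation of the $\Xi_i$ from the zeros of $\cos(\tfrac{\pi}{4}+\Theta(w))$---is exactly what the paper carries out explicitly.
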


Under condition \eqref{Case-large}, either
$
W(r), rW'+W>0
$
or
$
W(r), rW'+W>-1
$ with $|C'+\frac{n\pi}{4}|>\frac{n-2}{2}\pi$. From the proof above, the only possible case is $p=1$.

\subsection{$\tau=\frac{\pi}{2}$ Case}

When $\tau=\frac{\pi}{2}$, equation \eqref{Equ-exterior-domain} reads
\begin{equation}\label{equ-equ-4}
  \sum_{i=1}^{n} \arctan \lambda_{i}=C_0,\quad |x|>1.
\end{equation}
Let $W(r)$ be as in \eqref{equ-W}.   By a direct computation,
\begin{equation}\label{Equ-SPL-welldefined}
  \arctan (W+rW')+(n-1)\arctan W=C_0,
\end{equation}
where $C_0\in (-\frac{n}{2}\pi,\frac{n}{2}\pi)$,
i.e.,
\begin{equation}\label{equ-temp-5}
  W+rW'=\arctan \Theta(W)\quad\text{in}~r>1,
\end{equation}
where
$$
\Theta(w):=C_0-(n-1)\arctan w\in (-\frac{\pi}{2},\frac{\pi}{2}).
$$
Since \eqref{equ-temp-5} is a separable differential equation, by a direct computation, it leads to
\begin{equation}\label{Equ-SPL-radial-2}
  (W^2(r)+1)^{\frac{n-1}{2}}\cdot (W(r)\cos\Theta(W(r))-\sin\Theta(W(r)))=cr^{-n}
\end{equation}
for some constant $c$, for all $r>1$.

If $c=0$, then \eqref{Equ-SPL-radial-2} admits a constant solution $W(r)\equiv w_0:= \tan\frac{C_0}{n}$, which implies quadratic solutions
$u(x)=\frac{1}{2}\tan\frac{C_0}{n}|x|^2+c_0$ for any $c_0\in\mathbb R$.

If $c\not=0$, since \eqref{Equ-SPL-radial-2} holds for all $r>1$, we consider the inverse function $w(\xi)$ of $\xi=G(w)$ on entire $(0,c)$ or $(c,0)$, where
\begin{equation}\label{equ-sol-2}
  \left\{
  \begin{array}{lll}
    G(w):=(w^2+1)^{\frac{n-1}{2}}\cdot (w\cos\Theta(w)-\sin\Theta(w)),\\
    \Theta(w)\in(-\frac{\pi}{2},\frac{\pi}{2}).
  \end{array}
  \right.
\end{equation}
By a direct computation,
\begin{equation}\label{equ-deri-1}
  G'(w)=n(w^2+1)^{\frac{n-1}{2}}\cos(\Theta(w))>0.
\end{equation}
as long as $\Theta(w)\in (-\frac{\pi}{2},\frac{\pi}{2})$, that is
$\arctan w\in (\frac{C_0-\pi/2}{n-1},\frac{C_0+\pi/2}{n-1})$, i.e.,
$$
w\in\left\{
\begin{array}{lll}
  \left(-\infty,\tan\left(\frac{C_0+\pi/2}{n-1}\right)\right), & \text{if }
  -\frac{n}{2}\pi<C_0\leq -\frac{n-2}{2}\pi;\\
  \left(
  \tan\left(\frac{C_0-\pi/2}{n-1}\right),
  \tan\left(\frac{C_0+\pi/2}{n-1}\right)
  \right), & \text{if }-\frac{n-2}{2}\pi<C_0<\frac{n-2}{2}\pi;\\
  \left(
  \tan\left(\frac{C_0-\pi/2}{n-1}\right),+\infty
  \right), & \text{if }\frac{n-2}{2}\pi\leq C_0<\frac{n}{2}\pi.
\end{array}
\right.
$$
Thus $G(w)$ is monotone increasing in the above neighbourhood of $w_0$ and $\xi=G(w)$ admits  a unique inverse function $w(\xi)$ with $w(0)=w_0$ in $(\Xi_1,\Xi_2)$, where
$$
0>\Xi_1:=\left\{
\begin{array}{llll}
  -\infty & -\frac{n}{2}\pi<C_0< -\frac{n-2}{2}\pi,\\
  G(\tan\frac{C_0-\pi/2}{n-1})=-\left|\sec\left(\frac{C_0-\pi/2}{n-1}\right)\right|^{n-1}, & -\frac{n-2}{2}\pi\leq C_0<\frac{n}{2}\pi,\\
\end{array}
\right.
$$
and
$$
0<\Xi_2:=\left\{
\begin{array}{lll}
  G(\tan\frac{C_0+\pi/2}{n-1})=
  \left|\sec\left(\frac{C_0+\pi/2}{n-1}\right)\right|^{n-1}, & -\frac{n}{2}\pi<C_0<\frac{n-2}{2}\pi,\\
  +\infty, & \frac{n-2}{2}\pi\leq C_0<\frac{n}{2}\pi.\\
\end{array}
\right.
$$
By \eqref{Equ-SPL-radial-2} and the discussion above,
$$
W(r)=w(cr^{-n})\quad\forall~r>1
$$
implies
\begin{equation}\label{equ-def-u-5}
u(x)=\frac{1}{2}\tan\frac{C_0}{n}|x|^2+\int_{+\infty}^{|x|}(w(c\tau^{-n})-w(0))\cdot \tau\mathtt{d} \tau+c_0
\end{equation}
for $c_0\in\mathbb R$.

Furthermore, $G'(w_0)>0$ and hence $w(\xi)$ is analytic in a neighbourhood of $\xi=0$. Thus
\begin{equation}\label{Equ-asy-radial-5}
  u(x)=\frac{1}{2}\tan\frac{C_0}{n}|x|^2+c_0-|x|^2\sum_{j=1}^{\infty}
  \dfrac{w^{(j)}(0)}{(nj-2)j!}(c|x|^{-n})^j
\end{equation}
for sufficiently large $|x|$.

Thus in this case, we have
\begin{theorem}
  Let $u\in C^2(\mathbb R^n\setminus\overline{B_1})$ be a radially symmetric solution of \eqref{equ-equ-4}, then $u$ is given by \eqref{equ-def-u-5}, where $c_0\in\mathbb R$ and $c\in [\Xi_1,\Xi_2]$. Moreover, $u$ is analytic at infinity with expansion \eqref{Equ-asy-radial-5}.
\end{theorem}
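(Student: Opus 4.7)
The plan is to reduce the PDE to a separable first-order ODE and then invert the resulting algebraic relation, keeping track of the range of the integration constant for which a global solution on $r>1$ exists.

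First I would set $U(r)=u(x)$ with $r=|x|$ and use \eqref{Equ-eigenvlaues} to rewrite \eqref{equ-equ-4} as $\arctan(U''(r))+(n-1)\arctan(U'(r)/r)=C_0$. Introducing $W(r)=U'(r)/r$ so that $U''(r)=W(r)+rW'(r)$ converts the equation to \eqref{Equ-SPL-welldefined}, which I solve for $W+rW'=\tan(C_0-(n-1)\arctan W)$. This is the separable ODE \eqref{equ-temp-5}; writing it as $\frac{dW}{rW'}=\cdots$ and integrating the corresponding exact form yields \eqref{Equ-SPL-radial-2}, i.e.\ $G(W(r))=cr^{-n}$ with $G$ as in \eqref{equ-sol-2}. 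The constant of integration $c$ is exactly the parameter appearing in the theorem.

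Next I would handle the trivial case $c=0$, which forces $W\equiv w_0=\tan(C_0/n)$, recovering the quadratic solutions $u=\tfrac12\tan(C_0/n)|x|^2+c_0$. For $c\ne 0$, since $r\mapsto cr^{-n}$ sweeps out the interval $(0,c)$ or $(c,0)$ as $r$ ranges over $(1,\infty)$, a global solution exists precisely when $G$ admits an inverse branch $w$ defined on that whole interval with $w(0)=w_0$. I would therefore study $G$ on the connected component of $\{G'>0\}$ containing $w_0$; the computation \eqref{equ-deri-1} shows $G'(w)>0$ iff $\Theta(w)\in(-\pi/2,\pi/2)$, which by the monotonicity of $\arctan$ translates into the three subcases according to whether $C_0$ lies in $(-\tfrac{n}{2}\pi,-\tfrac{n-2}{2}\pi]$, $(-\tfrac{n-2}{2}\pi,\tfrac{n-2}{2}\pi)$ or $[\tfrac{n-2}{2}\pi,\tfrac{n}{2}\pi)$. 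Computing $G$ at the boundary of this interval (either an endpoint $w=\tan((C_0\pm\pi/2)/(n-1))$ or $\pm\infty$) gives precisely the values $\Xi_1<0<\Xi_2$ stated in the theorem, and the admissible range for $c$ is $[\Xi_1,\Xi_2]$. Reconstructing $u$ from $W(r)=w(cr^{-n})$ by integrating $U'(r)=rW(r)$ gives \eqref{equ-def-u-5}.

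Finally, analyticity at infinity is a clean consequence of the implicit/inverse function theorem: because $G'(w_0)=n(w_0^2+1)^{(n-1)/2}\cos\Theta(w_0)=n(w_0^2+1)^{(n-1)/2}\cos(C_0/n)\ne 0$, the inverse $w(\xi)$ is real analytic in a neighborhood of $\xi=0$. Substituting its Taylor series $w(\xi)=w_0+\sum_{j\ge 1}\frac{w^{(j)}(0)}{j!}\xi^j$ into $U'(r)=r\,w(cr^{-n})$ and integrating term by term produces \eqref{Equ-asy-radial-5}; convergence for $|x|$ large is automatic since the inner argument $c|x|^{-n}$ lies in the disk of convergence once $|x|$ exceeds a threshold depending on $|c|$. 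The main technical point I expect to be the most delicate is the bookkeeping of the three subcases for $\Xi_1,\Xi_2$ and verifying that in each of them the connected monotonicity component of $G$ containing $w_0$ indeed has image containing $[\Xi_1,\Xi_2]$; once that is done, the analytic expansion step is essentially automatic from $G'(w_0)\ne 0$.
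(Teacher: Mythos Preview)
Your proposal is correct and follows essentially the same route as the paper: reduce to the separable ODE \eqref{equ-temp-5}, integrate to the algebraic relation $G(W(r))=cr^{-n}$ with $G$ as in \eqref{equ-sol-2}, use \eqref{equ-deri-1} to locate the monotonicity interval of $G$ containing $w_0=\tan(C_0/n)$ (split into the three $C_0$-subcases), read off $\Xi_1,\Xi_2$ as the boundary values of $G$, and then get analyticity at infinity from $G'(w_0)=n(w_0^2+1)^{(n-1)/2}\cos(C_0/n)>0$ via the inverse function theorem. The only cosmetic difference is that you explicitly compute $\Theta(w_0)=C_0/n$ when checking $G'(w_0)\neq 0$, which the paper leaves implicit.
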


\section{Asymptotic expansions of general classical solutions}\label{Sec-Expansion-General}

In this section,  we give the asymptotic expansions of linear elliptic equations in subsection \ref{sec-sub-linear}, asymptotic expansions of linearized equation of \eqref{Equ-exterior-domain} and the proof of Theorem \ref{Thm-main-2} in  subsection \ref{Sec-sub-firstStep}.

\subsection{Asymptotic expansions of linear elliptic equations}\label{sec-sub-linear}

In this subsection, we consider the asymptotic expansion at infinity of solution of linear elliptic equation
\begin{equation}\label{Equ-linear}
  a_{ij}(x)D_{ij}v=0\quad\text{in}~\mathbb R^n\setminus\overline B_1,
\end{equation}
where the coefficients are smooth with a positive matrix limit $[a_{ij}(\infty)]>0$ at infinity and $v$ vanishes at infinity. We rewrite \eqref{Equ-linear} into $a_{ij}(\infty)D_{ij}v=(a_{ij}(\infty)-a_{ij}(x))D_{ij}v$ and analyze it by the asymptotic expansion at infinity of Poisson equation
\begin{equation}\label{Equ-Laplacian}
\Delta v=g\quad\text{in}~\mathbb R^n\setminus\overline B_1.
\end{equation}
Consider $g\in C^{\infty}(\mathbb R^n)$ with vanishing speed $g=O(|x|^{-k_1})$ as $|x|\rightarrow+\infty$ for some $k_1>2$. Then
$$
v(x)=\int_{\mathbb R^n}g(y)K(x-y)d y,
$$
is a solution of \eqref{Equ-Laplacian} with vanishing speed $$v=O(|x|^{2-\min\{n,k_1\}})\quad\text{as}~|x|\rightarrow+\infty,
$$
where $\omega_n:=|\mathbb S^{n-1}|$ and $K(x-y):=\frac{1}{(n-2)\omega_n}|x-y|^{2-n}$ is the fundamental solution of Laplace operator, see for instance \cite{Bao-Li-ZhangMA}.
Here we provide the following existence of solution with faster vanishing speed by spherical harmonic expansions as in \cite{Gunther-ConfoNormCoord}.

\begin{lemma}\label{Lem-existence-fastdecay}
  Let $g\in C^{\infty}(\mathbb R^n)$ satisfy
  \begin{equation}\label{Equ-cond-g}
  ||g(r\cdot)||_{L^p(\mathbb S^{n-1})}\leq c_0 r^{-k_1}(\ln r)^{k_2}\quad\forall~r>1
  \end{equation}
  for some $c_0>0,~k_1>2,k_2\ge 0$ and $p>\frac{n}{2}, p\geq 2$. Then there exists a smooth solution
  $v$ of \eqref{Equ-Laplacian}
  such that
  \begin{equation}\label{Equ-exist-1}
  |v(x)|\leq \left\{
  \begin{array}{lllll}
  Cc_0|x|^{2-k_1}(\ln |x|)^{k_2}, & k_1-n\not\in\mathbb{N},\\
  Cc_0|x|^{2-k_1}(\ln |x|)^{k_2+1}, & k_1-n\in\mathbb{N},\\
  \end{array}
  \right.
  \end{equation}
  for some constant $C$ relying only on $n,k_1,k_2$ and $p$.
\end{lemma}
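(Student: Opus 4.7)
The plan is to construct $v$ via a spherical harmonic expansion on each sphere, following the technique of Gunther cited in the paper. Let $\{Y_{k,j}\}_{j=1}^{N(n,k)}$ be an $L^2$-orthonormal basis of spherical harmonics of degree $k$ on $\mathbb{S}^{n-1}$, and expand $g(r\omega) = \sum_{k,j} g_{k,j}(r) Y_{k,j}(\omega)$ and similarly for the unknown $v$. In polar coordinates the Laplacian acts diagonally on each mode, so \eqref{Equ-Laplacian} decouples into the Euler-type ODEs
$$L_k v_{k,j} := v_{k,j}'' + \frac{n-1}{r} v_{k,j}' - \frac{k(k+n-2)}{r^2} v_{k,j} = g_{k,j}(r), \qquad r > 1,$$
with homogeneous solutions $r^k$ and $r^{2-n-k}$ and Wronskian $-(2k+n-2) r^{1-n}$.

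For each mode I would take the variation-of-parameters particular solution tailored to decay at infinity,
$$v_{k,j}(r) = -\frac{r^k}{2k+n-2}\int_r^{\infty}\! s^{1-k} g_{k,j}(s)\, ds - \frac{r^{2-n-k}}{2k+n-2}\int_{T_k}^{r}\! s^{n+k-1} g_{k,j}(s)\, ds,$$
where the inner lower limit is $T_k = +\infty$ when $s^{n+k-1} g_{k,j}$ is integrable at infinity (i.e., $k_1 > n+k$) and $T_k = 1$ otherwise, so that both pieces decay. Since $p \geq 2$ and $\mathbb{S}^{n-1}$ has finite measure, H\"older gives $\sum_{k,j} g_{k,j}(r)^2 \leq C \|g(r\cdot)\|_{L^p(\mathbb{S}^{n-1})}^2 \leq C c_0^2 r^{-2k_1}(\ln r)^{2k_2}$. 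Splitting the three subcases $k_1 > n+k$, $k_1 < n+k$ and $k_1 = n+k$, direct integration against this bound yields
$$|v_{k,j}(r)| \leq \frac{C_k\, c_0}{2k+n-2}\, r^{2-k_1}(\ln r)^{k_2+\delta_k},$$
with $\delta_k = 1$ precisely in the resonant case $k_1 = n+k$ and $\delta_k = 0$ otherwise. Because $k_1$ is fixed while $k$ ranges over $\mathbb{N}$, resonance can occur for at most one value of $k$, which accounts exactly for the dichotomy $k_1 - n \in \mathbb{N}$ versus $k_1 - n \not\in \mathbb{N}$ in \eqref{Equ-exist-1}.

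To recombine the modes into a smooth $v$ and extract the pointwise bound, I would exploit the smoothness of $g$: repeated integration by parts against $-\Delta_{\mathbb{S}^{n-1}} Y_{k,j} = k(k+n-2) Y_{k,j}$ transfers angular derivatives of $g$ onto $Y_{k,j}$ and produces rapid decay of $|g_{k,j}(r)|$ in $k$, which propagates to $v_{k,j}$ and gives absolute and uniform convergence of $\sum_{k,j} v_{k,j}(r) Y_{k,j}(\omega)$ on compact subsets of $\mathbb{R}^n \setminus \overline{B_1}$. Smoothness of the sum follows from interior elliptic regularity since $g \in C^{\infty}$. The main obstacle will be careful bookkeeping in the resonant case: I must verify that a logarithmic loss in $v_{k,j}$ occurs only when $k_1 - n \in \mathbb{N}$, that the constants $C_k$ are summable against the rapid decay in $k$ coming from smoothness of $g$, and that enough angular Sobolev control on $g$ is available uniformly in $r$ to execute the summation without degrading the radial rate $r^{2-k_1}(\ln r)^{k_2+\delta}$ claimed in \eqref{Equ-exist-1}.
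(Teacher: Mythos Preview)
Your mode-by-mode construction and the resonance dichotomy are correct and essentially identical to the paper's. The gap is in the recombination step. You propose to sum the pointwise bounds $|v_{k,j}(r)|\,|Y_{k,j}(\omega)|$ by using smoothness of $g$ to force rapid decay of $g_{k,j}(r)$ in $k$. But the hypothesis \eqref{Equ-cond-g} controls only $\|g(r\cdot)\|_{L^p}$, not any angular derivatives, and the constant $C$ in \eqref{Equ-exist-1} is required to depend only on $n,k_1,k_2,p$ and to enter linearly through $c_0$. So the ``angular Sobolev control uniformly in $r$'' you flag as an obstacle is genuinely unavailable: there is no way to manufacture $k$-decay of $g_{k,j}(r)$ with constants independent of higher norms of $g$. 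Without it, the sup-norms $\|Y_{k,j}\|_{L^\infty}\sim k^{(n-2)/2}$ and the dimensions $m_k\sim k^{n-2}$ make the pointwise series diverge. A further symptom that something is missing: your argument never uses $p>\frac{n}{2}$, only $p\ge 2$.

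The paper avoids this by not attempting a pointwise sum at all. It sums only the \emph{squares}: using Cauchy--Schwarz on each integral in the variation-of-parameters formula, then Parseval $\sum_{k,m}b_{k,m}(r)^2=\|g(r\cdot)\|_{L^2}^2$, one obtains
\[
\|v(r\cdot)\|_{L^2(\mathbb S^{n-1})}^2=\sum_{k,m}a_{k,m}(r)^2\le C c_0^2\, r^{4-2k_1}(\ln r)^{2k_2+2\delta},
\]
with no appeal to angular regularity of $g$. The upgrade to a pointwise bound is then done by a scaling argument plus the weak Harnack inequality for $\Delta v_r=g_r$ on an annulus (Theorem~8.17 in \cite{GT}), which is precisely where $p>\frac{n}{2}$ enters: it guarantees $\sup|v_r|\le C(\|v_r\|_{L^2}+\|g_r\|_{L^p})$ with a constant depending only on $n$ and $p$. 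Replacing your pointwise-summation paragraph with this $L^2$-plus-Harnack step closes the gap.
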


\begin{proof}
Let $\Delta_{\mathbb{S}^{n-1}}$ be the Laplace-Beltrami operator on unit sphere $\mathbb{S}^{n-1}\subset\mathbb{R}^n$ and
$$
\Lambda_0=0,~\Lambda_1=n-1,~\Lambda_2=2n,~\cdots,~\Lambda_k=k(k+n-2),~\cdots,
$$
be the sequence of eigenvalues of $-\Delta_{\mathbb S^{n-1}}$ with eigenfunctions
\begin{equation*}Y_1^{(0)}=1,~Y_{1}^{(1)}(\theta),~Y_{2}^{(1)}(\theta),~\cdots,~ Y_{n}^{(1)}(\theta),~\cdots,~Y_{1}^{(k)}(\theta),~\cdots,~Y_{m_k}^{(k)}(\theta),~\cdots
\end{equation*}
i.e.,
$$
-\Delta_{\mathbb{S}^{n-1}}Y_m^{(k)}(\theta)=\Lambda_kY_m^{(k)}(\theta)\quad\forall~
m=1,2,\cdots,m_k.
$$
The family of eigenfunctions forms a complete standard orthogonal basis of $L^2(\mathbb{S}^{n-1})$.

Expand $g$ and the wanted solution $v$ into
\begin{equation}\label{equ-star}
v(x)=\sum_{k=0}^{+\infty}\sum_{m=1}^{m_{k}} a_{k, m}(r) Y_{m}^{(k)}(\theta)\quad\text{and}\quad
g(x)=\sum_{k=0}^{+\infty}\sum_{m=1}^{m_{k}} b_{k, m}(r) Y_{m}^{(k)}(\theta),
\end{equation}
where
 $r=|x|, \theta=\frac{x}{|x|}$ and $$a_{k,m}(r):=\int_{\mathbb{S}^{n-1}} v(r \theta) \cdot Y_{m}^{(k)}(\theta) \mathtt{d} \theta,\quad b_{k,m}(r):=\int_{\mathbb{S}^{n-1}} g(r\theta) \cdot Y_{m}^{(k)}(\theta) \mathtt{d} \theta.$$
In spherical coordinates,
$$
\Delta v=\partial_{rr}v+\dfrac{n-1}{r}\partial_rv+\dfrac{1}{r^2}\Delta_{\mathbb{S}^{n-1}}v
$$
and \eqref{Equ-Laplacian} becomes
$$
\sum_{k=0}^{\infty} \sum_{m=1}^{m_{k}}\left(a_{ k,m}^{\prime \prime}(r)+\frac{n-1}{r} a_{ k,m}^{\prime}(r)-\frac{\Lambda_{k}}{r^{2}} a_{k,m}(r)\right) Y_{m}^{(k)}(\theta)=
\sum_{k=0}^{+\infty} \sum_{m=1}^{m_{k}} b_{k, m}(r) Y_{m}^{(k)}(\theta).
$$
By the linearly independence of eigenfunctions, for all $k\in\mathbb{N}$ and $m=1,2,\cdots,m_k$,
\begin{equation}\label{Equ-equ-equ}
a_{k,m}^{\prime \prime}(r)+\frac{n-1}{r} a_{ k,m}^{\prime}(r)-\frac{\Lambda_{k}}{r^{2}} a_{k,m}(r) =b_{k,m}(r)\quad\text{in }r>1.
\end{equation}

By solving the ODE, there exist constants $C_{k,m}^{(1)},C_{k,m}^{(2)}$ such that for all $r>1$,
\begin{equation}\label{Equ-def-Wronski}
\begin{array}{lll}
  a_{k,m}(r)&=&C_{k,m}^{(1)}r^k+
C_{k,m}^{(2)}r^{2-n-k}\\
&&\displaystyle
-\dfrac{1}{2-n}r^k\int_{2}^r\tau^{1-k}b_{k,m}(\tau)\mathtt{d} \tau
+\dfrac{1}{2-n}r^{2-k-n}\int_{2}^r\tau^{k+n-1}b_{k,m}(\tau)\mathtt{d} \tau
,
\end{array}
\end{equation}
 By \eqref{Equ-cond-g},
\begin{equation}\label{Equ-converge}
\sum_{k=0}^{+\infty}\sum_{m=1}^{m_k}|b_{k,m}(r)|^2=||g(r\cdot)||^2_{L^2(\mathbb{S}^{n-1})}
\leq c_0^2\omega_n^{\frac{p-2}{p}}r^{-2k_1}(\ln r)^{2k_2}
\end{equation}
for all $r>1$. Then $r^{1-k}b_{k,m}(r)\in L^1(2,+\infty)$  for all $k\in\mathbb N$ and $r^{k+n-1}b_{k,m}(r)\in L^1(2,+\infty)$  for all
$0\leq k<k_1-n, k\in\mathbb N$. We choose $C_{k,m}^{(1)}$ and $C_{k,m}^{(2)}$ in \eqref{Equ-def-Wronski} such that
\begin{equation}\label{Equ-def-v-2}
a_{k,m} (r):=
- \dfrac{1}{2-n}r^k\int_{+\infty}^r\tau^{1-k}b_{k,m}(\tau)\mathtt{d} \tau
+ \dfrac{1}{2-n}r^{2-k-n}\int_{+\infty}^r\tau^{k+n-1}b_{k,m}(\tau)\mathtt{d} \tau
\end{equation}
for all $0\leq k<k_1-n$ and
\begin{equation}\label{Equ-def-v-3}
a_{k,m} (r):=
- \dfrac{1}{2-n}r^k\int_{+\infty}^r\tau^{1-k}b_{k,m}(\tau)\mathtt{d} \tau
+ \dfrac{1}{2-n}r^{2-k-n}\int_{2}^r\tau^{k+n-1}b_{k,m}(\tau)\mathtt{d} \tau
\end{equation}
for all $k\geq k_1-n$

To prove that the series $v(x)$ defined by \eqref{equ-star} converges and obtain its convergence speed, consider
$$
\sum_{k=0}^{+\infty}\sum_{m=1}^{m_k}a_{k,m}^2(r)
=\left\{
\begin{array}{llll}\displaystyle
\sum_{k=0}^{[k_1-n+1]-1}\sum_{m=1}^{m_k}a_{k,m}^2(r)+
\sum_{k=[k_1-n+1]}^{+\infty}\sum_{m=1}^{m_k}a_{k,m}^2(r), & k_1-n\not\in\mathbb N,\\
\displaystyle
\sum_{k=0}^{k_1-n-1}\sum_{m=1}^{m_k}a_{k,m}^2(r)+
\sum_{k=k_1-n+1}^{+\infty}\sum_{m=1}^{m_k}a_{k,m}^2(r)+\sum_{m=1}^{m_{k_1-n}}a_{k_1-n,m}^2(r), & k_1-n\in\mathbb N,\\
\end{array}
\right.
$$
where $[k]$ denotes the largest natural number no larger than $k$.

For $k_1-n\not\in\mathbb N$, we pick $0<\varepsilon:=\frac{1}{2}\min\{1,\mathtt{dist}(k_1-n,\mathbb N)\}$ such that
$$
\left\{
\begin{array}{lll}
3-2k_1+\varepsilon<-1,\\
2n+2k-2k_1-1+\varepsilon<-1 & \forall~ 0\leq k\leq [k_1-n+1]-1,\\ 2n+2k-2k_1-1-\varepsilon>-1 & \forall~ k\geq [k_1-n+1].
\end{array}
\right.
$$
Thus \eqref{Equ-converge} implies
$$
\begin{array}{llll}
  &\displaystyle \sum_{k=0}^{+\infty}\sum_{m=1}^{m_k}a_{k,m}^2(r)\\
  \leq & \displaystyle \dfrac{2}{(n-2)^2}\sum_{k=0}^{+\infty}r^{2k}\left|\int_{+\infty}^{r} \tau^{1-k} b_{k, m}(\tau) \mathrm{d} \tau\right|^2
  +\dfrac{2}{(n-2)^2}\sum_{k=0}^{[k_1-n+1]-1}r^{2(2-k-n)}\left|
  \int_{+\infty}^{r} \tau^{k+n-1} b_{k, m}(\tau) \mathtt{d} \tau
  \right|^2\\&\displaystyle +
  \dfrac{2}{(n-2)^2}\sum_{k=[k_1-n+1]}^{+\infty}r^{2(2-k-n)}\left|
  \int_{2}^{r} \tau^{k+n-1} b_{k, m}(\tau) \mathtt{d} \tau
  \right|^2\\
  \leq &\displaystyle \sum_{k=0}^{+\infty}\sum_{m=1}^{m_{k}} \frac{2r^{2 k}}{(n-2)^{2}} \int^{+\infty}_{r} \tau^{2-2 k} \cdot \tau^{-3-\epsilon} \mathtt{d} \tau  \cdot \int^{+\infty}_{r} \tau^{3+\epsilon} \cdot b_{k, m}^{2}(\tau) \mathtt{d} \tau \\
  &\displaystyle +\sum_{k=0}^{[k_1-n+1]-1}\sum_{m=1}^{m_{k}} \frac{2r^{-2(k+n-2)}}{(n-2)^{2}} \int_{r}^{+\infty} \tau^{2 n+2 k-2 k_{1}-1+\epsilon}(\ln \tau)^{2 k_{2}} \mathtt{d} \tau \cdot \int_{r}^{+\infty} \tau^{2 k_{1}}(\ln \tau)^{-2 k_{2}}b_{k, m}^2(\tau) \frac{\mathtt{d} \tau}{\tau^{1+\epsilon}}\\
  &\displaystyle +\sum_{k=[k_1-n+1]}^{+\infty} \sum_{m=1}^{m_{k}} \frac{2r^{-2(k+n-2)}}{(n-2)^{2}}
  \int_{2}^{r} \tau^{2 n+2 k-2 k_{1}-1-\epsilon}(\ln \tau)^{2 k_{2}} \mathtt{d} \tau\cdot \int_{2}^{r} \tau^{2 k_{1}}(\ln \tau)^{-2 k_{2}} b_{k, m}^2(\tau)\frac{\mathtt{d} \tau}{\tau^{1-\epsilon}} \\
  \leq & \displaystyle \dfrac{2}{(n-2)^2}\sum_{k=0}^{+\infty}\dfrac{r^{-\varepsilon}}{2k+\varepsilon}
  \int_r^{+\infty}\tau^{3+\varepsilon} \sum_{m=1}^{m_k}b_{k,m}^2(\tau)\mathtt{d}\tau\\
  &+\displaystyle
   Cr^{4-2k_1+\epsilon}(\ln r)^{2k_2}
\int^{+\infty}_r
\tau^{2k_{1}}(\ln \tau)^{-2k_{2}}
\sum_{k=0}^{[k_1-n+1]-1}\sum_{m=1}^{m_k}
 b_{k,m}^2(\tau) \dfrac{\mathtt{d} \tau}{\tau^{1+\epsilon}}\\
&+\displaystyle C
  r^{4-2k_1-\epsilon}(\ln r)^{2k_2} \int_2^r
  \tau^{2k_{1}}(\ln \tau)^{-2k_{2}}
\sum_{k=[k_1-n+1]}^{+\infty}\sum_{m=1}^{m_k} b_{k,m}^2(\tau)\dfrac{\mathtt{d} \tau}{\tau^{1-\epsilon}}\\
\leq & Cc_0^2\cdot r^{4-2k_1}(\ln r)^{2k_2}.
\end{array}
$$

For $k_1-n\in\mathbb N$, we pick $\varepsilon:=\frac{1}{2}$. Then
$$
\left\{
\begin{array}{lll}
3-2k_1+\varepsilon<-1,\\
2n+2k-2k_1-1+\varepsilon<-1 & \forall~ 0\leq k\leq k_1-n-1,\\ 2n+2k-2k_1-1-\varepsilon>-1 & \forall~ k\geq k_1-n+1.
\end{array}
\right.
$$
Similar to the calculus above, \eqref{Equ-converge} implies
\begin{equation}\label{equ-estimate-4}
\begin{array}{llll}
  &\displaystyle \sum_{k=0}^{+\infty} \sum_{m=1}^{m_{k}} a_{k, m}^{2}(r)\\
  \leq & \displaystyle Cc_0^2\cdot r^{4-2k_1}(\ln r)^{2k_2}+C
  \sum_{k=k_1-n}\sum_{m=1}^{m_k}r^{2(2-k-n)}\left|\int_2^r\tau^{k+n-1}b_{k,m}(\tau)\mathtt d\tau\right|^2\\
  \leq &  Cc_0^2\cdot r^{4-2k_1}(\ln r)^{2k_2}\\
  & +\displaystyle C\sum_{k=k_1-n}\sum_{m=1}^{m_{k}}
  \int_2^r\tau^{2k_1}(\ln\tau)^{-2k_2}b_{k,m}^2(\tau)\dfrac{\mathtt d\tau}{\tau}\cdot
  \int_2^r\tau^{-1}(\ln\tau)^{2k_2}\mathtt d\tau\\
  \leq &  Cc_0^2\cdot r^{4-2k_1}(\ln r)^{2k_2}\\
  &+\displaystyle Cr^{4-2k_1}(\ln r)^{2k_2+1}\int_2^r\tau^{2k_1}(\ln\tau)^{-k_2}\sum_{m=1}^{m_{k_1-n}}b_{k_1-n,m}^2(\tau)
  \dfrac{\mathtt d\tau}{\tau}\\
  \leq & Cc_0^2\cdot r^{4-2k_1}(\ln r)^{2k_2+2}.\\
\end{array}
\end{equation}
This proves that $v(r)$ is well-defined, is a solution of \eqref{Equ-Laplacian} in distribution sense \cite{Gunther-ConfoNormCoord} and satisfies
\begin{equation}\label{equ-estimate-spherical}
\begin{array}{llllll}
||v(r\cdot)||^2_{L^2(\mathbb S^{n-1})}& \leq& \left\{
\begin{array}{lll}
  C c_0^2 \cdot r^{4-2k_1}(\ln r)^{2k_2}, & k_1-n\not\in\mathbb N,\\
  C c_0^2 \cdot r^{4-2k_1}(\ln r)^{2k_2+2}, & k_1-n\in\mathbb N.
\end{array}
\right.\\
\end{array}
\end{equation}
By interior regularity theory of elliptic differential equations,  $v$ is smooth \cite{GT}. It remains to prove the pointwise decay rate at infinity.

For any $r\gg 1$, we set
$$
v_r(x):=v(rx)\quad\forall~x\in B_4\setminus B_{1}=:D.
$$
Then $v_r$ satisfies
\begin{equation}\label{Equ-scaled}
\Delta v_r=r^2g(rx)=:g_r(x)\quad\text{in}~D.
\end{equation}
By weak Harnack inequality (see for instance Theorem 8.17 of \cite{GT}, see also (2.11) of \cite{Gunther-ConfoNormCoord}),
$$
\sup_{2<|x|<3}|v_r(x)|\leq C(n,p)\cdot \left(
||v_r||_{L^2(D)}+||g_r||_{L^p(D)}
\right).
$$
By \eqref{equ-estimate-spherical},
$$
\begin{array}{llll}
  ||v_r||_{L^2(D)}^2 & = & \displaystyle  \dfrac{1}{r^n}\int_{B_{4r}\setminus B_{r}}|v(x)|^2\mathtt{d} x\\
  &=&\displaystyle r^{-n}\int_{ r}^{4r}||v(\tau\theta)||_{L^2(\mathbb S^{n-1})}^2\cdot \tau^{n-1}\mathtt{d} \tau\\
  &\leq &
  \left\{
  \begin{array}{lll}
  \displaystyle Cc_0^2\cdot r^{-n}\int_{ r}^{4r}
  \tau^{4-2k_1}(\ln\tau)^{2k_2}\cdot \tau^{n-1}\mathtt{d} \tau,
  & k_1-n\not\in\mathbb N,\\
  \displaystyle Cc_0^2\cdot r^{-n}\int_{ r}^{4r}
  \tau^{4-2k_1}(\ln\tau)^{2k_2+2}\cdot \tau^{n-1}\mathtt{d} \tau,
  & k_1-n\in\mathbb N,\\
  \end{array}
  \right.\\
  &\leq & \left\{
  \begin{array}{llll}
    Cc_0^2\cdot  r^{4-2k_1}(\ln r)^{2k_2},  & k_1-n\not\in\mathbb N,\\
    Cc_0^2\cdot  r^{4-2k_1}(\ln r)^{2k_2+2}, & k_1-n\in\mathbb N.\\
  \end{array}
  \right.
\end{array}
$$
By \eqref{Equ-cond-g},
$$
\begin{array}{llll}
  ||g_r||_{L^p(D)}^p & = &\displaystyle
  \dfrac{r^{2p}}{r^n}\int_{B_{4r}\setminus B_{ r}}|g(x)|^p\mathtt{d} x\\
  &\leq & \displaystyle Cc_0^p \cdot r^{2p-n}\int_{ r}^{4r}\tau^{-pk_1}(\ln\tau)^{pk_2}\cdot \tau^{n-1}\mathtt{d} \tau\\
  &\leq & Cc_0^p\cdot r^{2p-pk_1}(\ln r)^{pk_2}.\\
\end{array}
$$
Combining the estimates above, we have
$$
\sup_{2r<|x|<3r}|v(x)|=
\sup_{2<|x|<3}|v_r(x)|\leq
\left\{
\begin{array}{llll}
  Cc_0r^{2-k_1}(\ln r)^{k_2}+Cc_0r^{2-k_1}(\ln r)^{k_2},
  & k_1-n\not\in\mathbb N,\\
  Cc_0r^{2-k_1}(\ln r)^{k_2+1}+Cc_0r^{2-k_1}(\ln r)^{k_2},
  & k_1-n\in\mathbb N,\\
\end{array}
\right.
$$
where $C$ relies only on $n,k_1,k_2$ and $p$.
This finishes the proof of Lemma \ref{Lem-existence-fastdecay}.
\end{proof}

By H\"older inequality, the constant $C$ relying on $p$ in \eqref{Equ-exist-1} remains finite when $p=\infty$ in \eqref{Equ-cond-g}.
For reading simplicity, hereinafter we let $v_g$ denote the solution constructed in Lemma \ref{Lem-existence-fastdecay}.
By Schauder estimates, vanishing speed of derivatives of $v_g$ follow immediately.
\begin{lemma}\label{lem-existence-fastd-deri}
  Let $g\in C^{\infty}(\mathbb R^n)$ satisfy
  \begin{equation}\label{Equ-RHS-converge}
  g=O_{l}(|x|^{-k_1}(\ln |x|)^{k_2})\quad\text{as}~|x|\rightarrow+\infty
  \end{equation}
  for some $k_1>2, k_2\geq 0$, $l-1\in\mathbb N$.   Then
  \begin{equation}\label{Equ-exist-2}
  v_g=\left\{
  \begin{array}{lll}
    O_{l+1}(|x|^{2-k_1}(\ln|x|)^{k_2}), & k_1-n\not\in\mathbb N,\\
    O_{l+1}(|x|^{2-k_1}(\ln|x|)^{k_2+1}), & k_1-n\in\mathbb N.\\
  \end{array}
  \right.
  \end{equation}
\end{lemma}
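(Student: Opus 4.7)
The plan is to use Lemma \ref{Lem-existence-fastdecay} as the $C^0$ input and upgrade it to the derivative bounds via rescaling plus interior Schauder estimates applied to the Poisson equation $\Delta v_g = g$. The proof is essentially a scaling/covering argument; no further analytic ingredients are needed beyond what is already in Lemma \ref{Lem-existence-fastdecay} and classical elliptic regularity.

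First I would rescale. For $r\gg 1$, set $v_r(x):=v_g(rx)$ and $g_r(x):=r^2g(rx)$ on the fixed annulus $D:=B_4\setminus\overline{B_1}$. Then $\Delta v_r=g_r$ in $D$. The hypothesis \eqref{Equ-RHS-converge} gives, for each $0\le k\le l$ and each $x\in D$,
$$|D^k g_r(x)|=r^{k+2}|(D^kg)(rx)|\le C\,r^{k+2}(r|x|)^{-k_1-k}(\ln(r|x|))^{k_2}\le C\,r^{2-k_1}(\ln r)^{k_2},$$
so $\|g_r\|_{C^{l,\alpha}(D)}\le C\,r^{2-k_1}(\ln r)^{k_2}$ for any fixed $\alpha\in(0,1)$. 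Likewise, Lemma \ref{Lem-existence-fastdecay} (applied at scale $r$) yields
$$\|v_r\|_{L^\infty(D)}=\sup_{r\le|y|\le 4r}|v_g(y)|\le C\,r^{2-k_1}(\ln r)^{k_2+\delta},\qquad \delta:=\begin{cases}0,&k_1-n\notin\mathbb N,\\ 1,&k_1-n\in\mathbb N.\end{cases}$$

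Second I would apply interior Schauder on the inner annulus $A:=\{2<|x|<3\}\Subset D$ to obtain
$$\|v_r\|_{C^{l+2,\alpha}(A)}\le C\bigl(\|v_r\|_{L^\infty(D)}+\|g_r\|_{C^{l,\alpha}(D)}\bigr)\le C\,r^{2-k_1}(\ln r)^{k_2+\delta}.$$
Unwinding the scaling, for $y=rx$ with $x\in A$ one has $|(D^kv_g)(y)|=r^{-k}|D^kv_r(x)|$, hence for every $0\le k\le l+1$,
$$|(D^kv_g)(y)|\le C\,r^{-k}\cdot r^{2-k_1}(\ln r)^{k_2+\delta}\le C\,|y|^{2-k_1-k}(\ln|y|)^{k_2+\delta}.$$
Letting $r$ range over $(R_0,\infty)$ and using the fact that the annuli $\{2r<|y|<3r\}$ cover $\{|y|>2R_0\}$ yields \eqref{Equ-exist-2}.

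The only subtle bookkeeping is the appearance of the extra $\ln|x|$ factor in the borderline integer case $k_1-n\in\mathbb N$. However, this dichotomy is already built into Lemma \ref{Lem-existence-fastdecay}, and since Schauder only transfers the $L^\infty$-norm of $v_r$ into the $C^{l+2,\alpha}$-norm without introducing new logarithmic losses, the same case distinction propagates to every derivative of $v_g$. Thus the derivative bound is a direct corollary, which I expect to be the main (mild) obstacle only in the sense of tracking the $(\ln|x|)^{k_2+\delta}$ factor consistently across both cases.
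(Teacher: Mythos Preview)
Your proof is correct and follows essentially the same approach as the paper: rescale to a fixed annulus, bound $\|g_r\|_{C^{l,\alpha}}$ and $\|v_r\|_{L^\infty}$ using the hypothesis and Lemma \ref{Lem-existence-fastdecay}, apply interior Schauder, and unscale. The only cosmetic difference is that the paper invokes Schauder to get $C^{l+1,\alpha}$ control (using $\|g_r\|_{C^{l-1,\alpha}}$) rather than $C^{l+2,\alpha}$, but this is immaterial since only $l+1$ derivatives are claimed in \eqref{Equ-exist-2}.
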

\begin{proof}
  For sufficiently large $r\gg 1$, let $v_r(x)$ be as in Lemma \ref{Lem-existence-fastdecay}, which satisfies \eqref{Equ-scaled}.
  By a direct computation, for all $0<\alpha<1$, there exists $C>0$ independent of $r$ such that
  $$
  ||g_r||_{C^l(B_4\setminus B_1)}
  \leq Cr^{2-k_1}(\ln r)^{k_2}.
  $$
  By \eqref{Equ-exist-1} in Lemma \ref{Lem-existence-fastdecay}, there exists $C>0$ independent of $r$ such that
  $$
  ||v_r||_{L^{\infty}(B_4\setminus B_{1})}\leq
  \left\{
  \begin{array}{llll}
    Cr^{2-k_1}(\ln r)^{k_2}, & k_1-n\not\in\mathbb N,\\
    Cr^{2-k_1}(\ln r)^{k_2+1}, & k_1-n\in\mathbb N,\\
  \end{array}
  \right.
  $$
  By interior estimates of Schauder type (see \cite{GT}, Chap. 6),
  $$
  \begin{array}{lllll}
  ||v_r||_{C^{l+1,\alpha}(B_{3}\setminus B_{2})}&
  \leq&C\left(
  ||v_r||_{L^{\infty}(B_4\setminus B_{1})}
  +||g_r||_{C^{l-1,\alpha}(B_4\setminus B_1)}
  \right)\\
  &\leq & \left\{
  \begin{array}{llll}
    Cr^{2-k_1}(\ln r)^{k_2}, & k_1-n\not\in\mathbb N,\\
    Cr^{2-k_1}(\ln r)^{k_2+1}, & k_1-n\in\mathbb N,\\
  \end{array}
  \right.
  \end{array}
  $$
  Thus for all $0\leq l_0\leq l+1, l_0\in\mathbb N$,
  $$
  r^{l_0}D^{l_0}v_g(rx)=D^{l_0}v_r(x)\leq  \left\{
  \begin{array}{llll}
    Cr^{2-k_1}(\ln r)^{k_2}, & k_1-n\not\in\mathbb N,\\
    Cr^{2-k_1}(\ln r)^{k_2+1}, & k_1-n\in\mathbb N,\\
  \end{array}
  \right.
  $$
  By the arbitrariness of $r$, this finishes the proof of \eqref{Equ-exist-2}.
\end{proof}

As a consequence, we obtain the following asymptotic expansion of solutions of \eqref{Equ-Laplacian}.
\begin{lemma}\label{Lem-expansion-Laplace}
  Let $g\in C^{\infty}(\mathbb R^n\setminus\overline{B_1})$ satisfy \eqref{Equ-RHS-converge} and $v\in C^2(\mathbb R^n\setminus\overline{B_1})$ be a classical solution of \eqref{Equ-Laplacian} with $v=O(|x|^{2-k_3}(\ln|x|)^{k_4})$ where
  \begin{equation}\label{equ-cond-parameter}
  n\leq k_3<k_1,\quad k_1,k_3,l-1\in\mathbb N\quad\text{and}\quad
  k_2,k_4\geq 0.
  \end{equation}
  Then there exist constants $c_{k,m}$ with $k=k_3-n,\cdots,k_1-n-1$, $m=1,\cdots,m_k$ such that
  \begin{equation}\label{Equ-decompose-Laplace}
  v=
  \sum_{k=k_{3}-n}^{k_{1}-n-1} \sum_{m=1}^{m_{k}} c_{k,m}|x|^{-(k+n-2)} Y_{m}^{(k)}(\theta)+O_{l+1}\left(|x|^{2-k_{1}}(\ln |x|)^{k_{2}+1}\right).
  \end{equation}
\end{lemma}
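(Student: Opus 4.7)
The natural plan is to reduce (\ref{Equ-Laplacian}) to the study of a harmonic correction by subtracting off a particular solution with the required decay, and then to exploit spherical harmonic expansions of harmonic functions on exterior domains. First, I would multiply $g$ by a smooth cutoff $\eta$ equal to one for $|x|\ge 2$ and vanishing inside $B_{3/2}$, and extend $\tilde g:=\eta g$ by zero to obtain a function in $C^{\infty}(\mathbb{R}^n)$ that still satisfies $\tilde g=O_l(|x|^{-k_1}(\ln|x|)^{k_2})$. Since $k_1-n\in\mathbb{N}$ under (\ref{equ-cond-parameter}), Lemma \ref{lem-existence-fastd-deri} furnishes $v_{\tilde g}$ on $\mathbb{R}^n$ with
\[
\Delta v_{\tilde g}=\tilde g\quad\text{and}\quad v_{\tilde g}=O_{l+1}\bigl(|x|^{2-k_1}(\ln|x|)^{k_2+1}\bigr).
\]
Setting $h:=v-v_{\tilde g}$ then gives a function which is harmonic on $\mathbb{R}^n\setminus\overline{B_2}$ and, because $k_3<k_1$ forces $v_{\tilde g}$ to decay strictly faster than $v$, still satisfies $h=O(|x|^{2-k_3}(\ln|x|)^{k_4})$ at infinity.

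Next I would analyze $h$ via its spherical harmonic expansion. Because $h$ is harmonic on an exterior domain and vanishes at infinity, the Kelvin transform $\hat h(y)=|y|^{2-n}h(y/|y|^2)$ is bounded near $y=0$ (using $k_3\ge n$) and therefore extends as a real-analytic harmonic function across the origin. Pulling back its Taylor series yields a convergent expansion
\[
h(x)=\sum_{k=0}^{\infty}\sum_{m=1}^{m_k}c_{k,m}\,r^{-(k+n-2)}Y_m^{(k)}(\theta),\qquad r=|x|\gg 1,
\]
with no growing modes $r^k Y_m^{(k)}$. Projecting the pointwise bound $\|h(r\,\cdot)\|_{L^{\infty}(\mathbb{S}^{n-1})}\le Cr^{2-k_3}(\ln r)^{k_4}$ onto each $Y_m^{(k)}$ via Cauchy--Schwarz gives $|c_{k,m}|r^{-(k+n-2)}\le Cr^{2-k_3}(\ln r)^{k_4}$, i.e.\ $|c_{k,m}|\le Cr^{k+n-k_3}(\ln r)^{k_4}$, and sending $r\to\infty$ forces $c_{k,m}=0$ for every $k<k_3-n$.

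Let $S(x)$ denote the finite sum of the surviving modes $k_3-n\le k\le k_1-n-1$ appearing in (\ref{Equ-decompose-Laplace}). The remainder $\tilde h:=h-S$ is harmonic outside $\overline{B_2}$ and its spherical harmonic expansion starts at degree $k\ge k_1-n$, so its Kelvin transform vanishes to order at least $k_1-n$ at the origin. Taylor's theorem for the Kelvin transform then gives $|\hat h(y)|\le C|y|^{k_1-n}$ in a neighborhood of $0$, which pulls back to $\tilde h(x)=O(|x|^{2-k_1})$. Since $\tilde h$ is harmonic, standard interior gradient estimates upgrade this bound to $\tilde h=O_{l+1}(|x|^{2-k_1})$, strictly better than the target rate. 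Assembling $v=v_{\tilde g}+S+\tilde h$ produces the expansion (\ref{Equ-decompose-Laplace}), with the logarithmic factor $(\ln|x|)^{k_2+1}$ contributed solely by the $v_{\tilde g}$ part.

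The step I expect to be most delicate is the justification that the spherical harmonic series for $h$ converges (together with enough derivatives) uniformly on spherical shells $\{|x|=r\}$ throughout the exterior region, so that the mode-by-mode projection which kills the coefficients $c_{k,m}$ with $k<k_3-n$ and the identification of $S$ are genuinely valid. The Kelvin transform plus interior regularity handles this in principle, but one must check the absolute convergence of the tail series in order to justify the termwise manipulations; the remaining ingredients (cutoff extension of $g$, preservation of the $O_l$ decay under multiplication by $\eta$, and application of harmonic interior estimates) are routine.
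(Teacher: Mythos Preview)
Your proposal is correct and follows essentially the same strategy as the paper: subtract a fast-decaying particular solution furnished by Lemma~\ref{lem-existence-fastd-deri}, then expand the harmonic remainder in spherical harmonics and use the decay rate $O(|x|^{2-k_3}(\ln|x|)^{k_4})$ to kill the growing modes and the coefficients with $k<k_3-n$. The paper argues the expansion directly from the ODE representation \eqref{Equ-def-Wronski} rather than via the Kelvin transform, and is terser about the cutoff of $g$ and the tail estimate, but the architecture is the same.
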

\begin{proof}
  By Lemma \ref{lem-existence-fastd-deri}, $\widetilde v(x):=v(x)-v_g$ satisfies
  $$
  \Delta \widetilde v=0\quad\text{in}~\mathbb R^n\setminus\overline{B_1}
  $$
  with $$v_g=O_{l+1}(|x|^{2-k_1}(\ln |x|)^{k_2+1})\quad\text{and}
  \quad
  \widetilde v=O(|x|^{2-k_3}(\ln |x|)^{k_4}).
  $$
  Similar to the proof of Lemma \ref{Lem-existence-fastdecay}, we expand $\widetilde v$ into spherical harmonics as
  $$
  \widetilde v(x)=\sum_{k=0}^{+\infty}\sum_{m=1}^{m_k}a_{k,m}(r)Y^{(k)}_m(\theta).
  $$
  It follows from \eqref{Equ-def-Wronski} that there are constants $C_{k,m}^{(1)}, C_{k,m}^{(2)}$ such that  $$
  \widetilde{v}=\sum_{k=0}^{\infty} \sum_{m=1}^{m_{k}} C_{k,m}^{(1)} r^{k} Y_{m}^{(k)}(\theta)+\sum_{k=0}^{\infty} \sum_{m=1}^{m_{k}} C_{k,m}^{(2)} r^{-(k+n-2)} Y_{m}^{(k)}(\theta).
  $$
  By the vanishing speed of $\widetilde v$, we have
  $$
  C_{k,m}^{(1)}=0\quad\forall~k,m;\quad
  C_{k,m}^{(2)}=0\quad\forall~k<k_3-n,~m=1,\cdots,m_k.
  $$
  This finishes the proof by setting $c_{k,m}:=C_{k,m}^{(2)}$.
\end{proof}

By Lemma \ref{Lem-expansion-Laplace} we shall obtain the asymptotic expansions for linear elliptic equation \eqref{Equ-linear}.
For a positive symmetric matrix $[a_{ij}]$, there exists a unique square root matrix $Q:=[a_{ij}]^{\frac{1}{2}}$  such that $Q^T=Q$ and $[a_{ij}]=Q^TQ$.
Also, we let $[a_{ij}]^{-\frac{1}{2}}$ denote the inverse matrix of  $[a_{ij}]^{\frac{1}{2}}$ and $[a_{ij}]^{-1}$ denote the inverse matrix of $[a_{ij}]$.
\begin{proposition}\label{Prop-expansion-linear-perturb}
  Let  $v\in C^2(\mathbb R^n\setminus\overline{B_1})$ be a classical solution of \eqref{Equ-linear} with  $$v=O_2(|x|^{2-k_3}(\ln|x|)^{k_4}),\quad
  a_{i j}(x)-a_{i j}(\infty)=O_{l}\left(|x|^{-k_{5}}(\ln |x|)^{k_{6}}\right)
  $$ where
  $$
  k_3-n,k_5,l-1\in\mathbb N,\quad k_4,k_6\geq 0
  $$
  and $[a_{ij}(\infty)]>0$ being a positive, symmetric matrix.
  Then there exist constants $c_{k,m}$ with $k=k_3-n,\cdots,k_1-n-1$, $m=1,\cdots,m_k$ such that
  $$
  v=
  \sum_{k=k_{3}-n}^{k_3+k_{5}-n-1} \sum_{m=1}^{m_{k}} c_{k,m}(x^T[a_{ij}(\infty)]^{-1}x)^{\frac{2-n-k}{2}} Y_{m}^{(k)}(\theta)+O_{l+1}\left(|x|^{2-k_{3}-k_5}(\ln |x|)^{k_{4}+k_6+1}\right),
  $$
  where
  \begin{equation}\label{Equ-def-Theta}
  \theta=\dfrac{[a_{ij}(\infty)]^{-\frac{1}{2}}x}{(x^T[a_{ij}(\infty)]^{-1}x)^{\frac{1}{2}}}.
  \end{equation}
\end{proposition}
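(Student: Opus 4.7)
The plan is to reduce to Lemma~\ref{Lem-expansion-Laplace} via a linear change of variables that turns the constant-coefficient operator $a_{ij}(\infty)D_{ij}$ into $\Delta$, and then to use a Schauder bootstrap to bring the resulting perturbation source term into the regularity class required by that lemma. Concretely, I would set $Q:=[a_{ij}(\infty)]^{1/2}$ (the positive symmetric square root), $y:=Q^{-1}x$, and $\tilde v(y):=v(Qy)$. Using $Q^T=Q$ and $Q^2=[a_{ij}(\infty)]$, a direct calculation gives
\[
\Delta_y\tilde v(y)=a_{ij}(\infty)D_{ij}v(Qy)=\bigl(a_{ij}(\infty)-a_{ij}(Qy)\bigr)D_{ij}v(Qy)=:\tilde g(y),
\]
while $|y|^2=x^T[a_{ij}(\infty)]^{-1}x$ and $y/|y|$ is exactly the unit vector $\theta$ of \eqref{Equ-def-Theta}. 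So the expansion claimed for $v$ in $x$-coordinates is precisely the spherical-harmonic expansion of $\tilde v$ in $y$-coordinates.

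The main technical step is to show that $\tilde g$ lies in the class $O_l$ with the right decay. A priori $v=O_2$ only gives $g=(a_{ij}(\infty)-a_{ij})D_{ij}v$ in $O_0$, so I would first bootstrap the decay of $v$ by the rescaling argument of Lemma~\ref{lem-existence-fastd-deri}: on $B_{4r}\setminus B_r$ put $v_r(x):=v(rx)$, which solves $a_{ij}(rx)D_{ij}v_r=0$ on $B_4\setminus B_1$. Since $a_{ij}-a_{ij}(\infty)=O_l$, the coefficients $a_{ij}(r\cdot)$ are uniformly bounded in $C^{l,\alpha}(B_4\setminus B_1)$ for large $r$, and interior Schauder estimates give $\|v_r\|_{C^{l+2,\alpha}(B_3\setminus B_2)}\le C\|v_r\|_{L^\infty(B_4\setminus B_1)}$. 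Undoing the rescaling upgrades the hypothesis to $v=O_{l+2}(|x|^{2-k_3}(\ln|x|)^{k_4})$, and the Leibniz rule then yields $g=O_l(|x|^{-k_3-k_5}(\ln|x|)^{k_4+k_6})$. Since $x\mapsto y=Q^{-1}x$ is a linear bijection with $|y|\simeq|x|$, the same decay carries over to $\tilde g$ in $y$.

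With this in hand, Lemma~\ref{Lem-expansion-Laplace} applied to $\Delta_y\tilde v=\tilde g$ with parameters $k_1=k_3+k_5$, $k_2=k_4+k_6$, and the remaining parameters unchanged, produces constants $c_{k,m}$ with
\begin{equation*}
\tilde v(y)=\sum_{k=k_3-n}^{k_3+k_5-n-1}\sum_{m=1}^{m_k}c_{k,m}|y|^{-(k+n-2)}Y_m^{(k)}\!\left(\frac{y}{|y|}\right)+O_{l+1}\bigl(|y|^{2-k_3-k_5}(\ln|y|)^{k_4+k_6+1}\bigr),
\end{equation*}
where the integer case $k_3+k_5-n\in\mathbb N$ is what automatically supplies the extra $\ln$-factor in the error. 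Substituting back $|y|=(x^T[a_{ij}(\infty)]^{-1}x)^{1/2}$, $y/|y|=\theta$, and noting that the linear bijection $x=Qy$ preserves the $O_{l+1}$ class up to constants, gives the proposition (the degenerate case $k_5=0$ is trivial, as the sum is empty). I expect the bootstrap to be the main obstacle: the bare assumption $v=O_2$ is too weak to even write down $\tilde g$ at the regularity level needed by Lemma~\ref{Lem-expansion-Laplace}, and one must propagate the polynomial and logarithmic decay sharply through $l+2$ derivatives; once this is done, the remainder is mainly bookkeeping in the $x\leftrightarrow y$ change of coordinates.
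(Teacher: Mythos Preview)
Your proposal is correct and follows the same route as the paper: the linear change of variables $y=Q^{-1}x$ with $Q=[a_{ij}(\infty)]^{1/2}$ reduces $a_{ij}(\infty)D_{ij}$ to $\Delta$, and then Lemma~\ref{Lem-expansion-Laplace} (with $k_1=k_3+k_5$, $k_2=k_4+k_6$) gives the expansion in $y$, which translates back to $x$ exactly as you describe. The one difference is that you include an explicit Schauder bootstrap upgrading $v=O_2$ to $v=O_{l+2}$ so that $g=O_l$; the paper simply asserts $g=O_l$ ``by a direct computation'' without further comment, so your added step makes the argument more complete.
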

\begin{proof}
  As in Lemma 6.1 of \cite{GT}, let $Q:=[a_{ij}(\infty)]^{\frac{1}{2}}$ and $V(x):=v(Qx)$.
  Since trace is invariant under cyclic permutations,
  $$
  \Delta V(x)=(a_{ij}(\infty)-a_{ij}(Qx))D_{ij}v(Qx)=:g(x)
  $$
  in $Q^{-1}(\mathbb R^n\setminus\overline{B_1})$.
  By a direct computation, $$g=O_{l}(|x|^{-(k_3+k_5)}(\ln|x|)^{k_4+k_6})\quad\text{and}\quad V=O(|x|^{2-k_3}(\ln|x|)^{k_4}).$$ By Lemma \ref{Lem-expansion-Laplace}, there exists constants $c_{k,m}$ with $k=k_{3}-n, \cdots, k_{1}-n-1, m=1, \cdots, m_{k}$ such that $V$ has a decomposition of \eqref{Equ-decompose-Laplace}. The result follows immediately.
\end{proof}

As an application, we state the following special case of Proposition \ref{Prop-expansion-linear-perturb}.
\begin{corollary}\label{Coro-1}
  Let $v$ be a classical solution of \eqref{Equ-linear} with $v=O_l(|x|^{2-n})$ and
  $$
  a_{ij}(x)-a_{ij}(\infty)=O_l(|x|^{-n})\quad\text{as}~|x|\rightarrow+\infty
  $$
  for all $l\in\mathbb N$ with some positive matrix $[a_{ij}(\infty)]$. Then there exist constants $c_{k,m}$ with $k=0,1,\cdots,n-1$, $m=1,\cdots,m_k$ such that
  $$
  v(x)=c_{0,1}(x^T[a_{ij}(\infty)]^{-1}x)^{\frac{2-n}{2}}
  +\sum_{k=1}^{n-1}\sum_{m=1}^{m_k}c_{k,m}(x^T[a_{ij}(\infty)]^{-1}x)^{\frac{2-n-k}{2}}
  Y_{m}^{(k)}(\theta)+O_l(|x|^{2-2n}(\ln|x|))
  $$
  as $|x|\rightarrow+\infty$ for all $l\in\mathbb N$, where  $\theta$ is  as in \eqref{Equ-def-Theta}.
\end{corollary}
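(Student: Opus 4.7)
The plan is to derive Corollary~\ref{Coro-1} as an immediate specialization of Proposition~\ref{Prop-expansion-linear-perturb}. I would fix the parameter choice $k_3=n$, $k_4=0$, $k_5=n$, $k_6=0$, so that the proposition's decay hypotheses on $v$ and on $a_{ij}(x)-a_{ij}(\infty)$ become exactly $v=O_l(|x|^{2-n})$ and $a_{ij}(x)-a_{ij}(\infty)=O_l(|x|^{-n})$, both of which are assumed in the corollary for every $l$. The integrality conditions $k_3-n=0\in\mathbb{N}$ and $k_5=n\in\mathbb{N}$ are immediate, and the positivity and symmetry of $[a_{ij}(\infty)]$ are inherited from the hypothesis.

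With these choices the summation index $k$ in the proposition runs over $k_3-n=0,1,\ldots,k_3+k_5-n-1=n-1$, and the remainder becomes
\[
O_{l+1}\bigl(|x|^{2-k_3-k_5}(\ln|x|)^{k_4+k_6+1}\bigr)=O_{l+1}(|x|^{2-2n}\ln|x|).
\]
Since $l$ is arbitrary in both the hypothesis and the conclusion, this upgrades to $O_l(|x|^{2-2n}\ln|x|)$ for every $l\in\mathbb{N}$, which is precisely the error term stated in the corollary. The definition of $\theta$ in the corollary agrees with \eqref{Equ-def-Theta} applied to $[a_{ij}(\infty)]$, so no translation of notation is needed.

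It then remains only to isolate the $k=0$ contribution: since $\mathcal{H}_0^n$ is one-dimensional with basis $Y_1^{(0)}\equiv 1$, the $k=0$ inner sum collapses to the single term $c_{0,1}(x^T[a_{ij}(\infty)]^{-1}x)^{(2-n)/2}$, which is exactly the leading summand displayed in the corollary; the remaining sums over $k=1,\ldots,n-1$ and $m=1,\ldots,m_k$ reproduce the stated expansion verbatim. There is no substantive obstacle---the content is essentially bookkeeping---but the one point I would verify carefully is that we are in the critical case $k_1-n=n\in\mathbb{N}$ of Lemma~\ref{Lem-expansion-Laplace}, which is responsible for the $\ln|x|$ factor in the remainder and cannot be avoided by a different parameter choice.
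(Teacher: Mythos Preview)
Your proposal is correct and matches the paper's approach exactly: the paper simply introduces Corollary~\ref{Coro-1} with the sentence ``As an application, we state the following special case of Proposition~\ref{Prop-expansion-linear-perturb}'' and gives no further argument, so the parameter specialization $k_3=n$, $k_4=0$, $k_5=n$, $k_6=0$ that you spell out is precisely what is intended. Your additional remarks about the $k=0$ term collapsing via $Y_1^{(0)}\equiv 1$ and about the $\ln|x|$ factor arising from the critical case are accurate and useful clarifications that the paper leaves implicit.
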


\subsection{Asymptotic expansion of general classical solutions}\label{Sec-sub-firstStep}

In this subsection, we analyze the linearized equation of \eqref{Equ-fullyNonlinear-intro} by the asymptotic expansion of linear elliptic equations in subsection \ref{sec-sub-linear} to prove Theorem  \ref{Thm-main-2}.

\begin{lemma}\label{lem-expansion-second}
  Let $u\in C^2(\mathbb R^n\setminus\overline{B_1})$ be a classical solution of
  \eqref{Equ-fullyNonlinear-intro}
  with smooth $F$.  Suppose $u$ satisfies \eqref{ConvergenceSpeed1} for all $l\in\mathbb N$ for some $\gamma\in\mathbb R, \beta\in\mathbb R^n$ and $A\in\mathtt {Sym}(n)$ satisfying $DF(A)>0$ and $F(A)=C_0$. Then there exist constants $c_{k,m}$ with $k=0,1,\cdots,n-1$, $m=1,\cdots,m_k$ such that
  \begin{equation}\label{convergenceSpeed2-ori}
    \begin{array}{lll}
      &\displaystyle u(x)-\left(\frac{1}{2} x^T A x+\beta \cdot x+\gamma\right)\\
      -&\displaystyle \left(
      c_0(x^T(DF(A))^{-1}x)^{\frac{2-n}{2}}
      +\sum_{k=1}^{n-1}c_k(\theta)(x^T(DF(A))^{-1}x)^{\frac{2-n-k}{2}}
      \right)\\
      =& O_l(|x|^{2-2n}(\ln|x|))
    \end{array}
  \end{equation}
  for all $l\in\mathbb N$, where
  \begin{equation}\label{equ-ck-Theta}
  c_k(\theta)=\sum_{m=1}^{m_k}c_{k,m}Y_m^{(k)}(\theta)\in \mathcal H_k^n\quad\text{and}\quad
  \theta=\dfrac{(DF(A))^{-\frac{1}{2}}x}{(x^T(DF(A))^{-1}x)^{\frac{1}{2}}}.
  \end{equation}
\end{lemma}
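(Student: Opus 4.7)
The strategy is to linearize $F(D^2u)=C_0$ around the quadratic polynomial $P(x):=\tfrac{1}{2}x^TAx+\beta\cdot x+\gamma$ and reduce the lemma to the linear result Corollary \ref{Coro-1} proved in the previous subsection. Setting $v:=u-P$, the hypothesis gives $v=O_l(|x|^{2-n})$ for every $l\in\mathbb{N}$; in particular $D^2v=O_l(|x|^{-n})$. Using $F(A)=C_0$ and the fundamental theorem of calculus,
\[
0=F(A+D^2v)-F(A)=\int_0^1 \frac{d}{dt}F(A+tD^2v)\,dt = a_{ij}(x)D_{ij}v(x),
\]
where
\[
a_{ij}(x):=\int_0^1 \partial_{M_{ij}}F\bigl(A+tD^2v(x)\bigr)\,dt.
\]
Thus $v$ satisfies a homogeneous linear elliptic equation of the form \eqref{Equ-linear}.

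Next I would verify that the coefficients $a_{ij}$ satisfy the hypotheses of Corollary \ref{Coro-1}. Smoothness of $a_{ij}$ is immediate from smoothness of $F$ and interior regularity of $v$. Since $D^2v(x)\to 0$ as $|x|\to\infty$, we have $a_{ij}(\infty)=\partial_{M_{ij}}F(A)=(DF(A))_{ij}$, which is positive by assumption. A Taylor expansion of $\partial_{M_{ij}}F$ around $A$, combined with $D^2v=O_l(|x|^{-n})$ for every $l$, then yields
\[
a_{ij}(x)-a_{ij}(\infty)=O_l(|x|^{-n}) \quad \text{as } |x|\to\infty
\]
for every $l\in\mathbb{N}$, which is precisely the decay rate required by Corollary \ref{Coro-1}.

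Finally, applying Corollary \ref{Coro-1} to $v$ produces constants $c_{k,m}$ with $k=0,1,\dots,n-1$ and $m=1,\dots,m_k$ such that
\[
v(x)=c_{0,1}\bigl(x^T(DF(A))^{-1}x\bigr)^{\frac{2-n}{2}}+\sum_{k=1}^{n-1}\sum_{m=1}^{m_k} c_{k,m}\bigl(x^T(DF(A))^{-1}x\bigr)^{\frac{2-n-k}{2}}Y_m^{(k)}(\theta)+O_l(|x|^{2-2n}\ln|x|),
\]
with $\theta$ as in \eqref{equ-ck-Theta}. Packaging the angular components into $c_k(\theta):=\sum_{m=1}^{m_k}c_{k,m}Y_m^{(k)}(\theta)\in\mathcal{H}_k^n$ gives exactly \eqref{convergenceSpeed2-ori}, and unwinding $v=u-P$ finishes the proof.

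The only genuinely delicate point is the derivation of the decay rate for the linearized coefficients $a_{ij}(x)-a_{ij}(\infty)$ at the order required by Corollary \ref{Coro-1}; everything else is bookkeeping. The apparent two-derivative loss when passing from the $O_l$ control on $v$ to $O_l$ control on $D^2v$ is harmless because the hypothesis provides $O_l$ for every $l\in\mathbb{N}$. No compactness argument or iteration is needed, since the nonlinear structure has been absorbed entirely into the smooth coefficient matrix $a_{ij}$.
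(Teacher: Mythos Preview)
Your proposal is correct and follows essentially the same route as the paper: define $v=u-P$, use the fundamental theorem of calculus to write $a_{ij}(x)D_{ij}v=0$ with $a_{ij}(x)=\int_0^1 \partial_{M_{ij}}F(A+tD^2v)\,dt$, observe $a_{ij}(\infty)=DF(A)$ and $a_{ij}(x)-a_{ij}(\infty)=O_l(|x|^{-n})$, then apply Corollary~\ref{Coro-1}. The only detail the paper adds that you gloss over is a preliminary rescaling $\widetilde u(x)=R^{-2}u(Rx)$ to ensure uniform ellipticity on the whole exterior domain before invoking interior regularity, but this is a minor technicality and your appeal to interior regularity is in the same spirit.
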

\begin{proof}
  Since $DF(A)>0$, we may assume that $F$ is uniformly elliptic, otherwise we consider $\widetilde u(x):=R^{-2}u(Rx)$ with sufficiently large $R$ such that $F(D^2\widetilde u)=C_0$ is uniformly elliptic in $\mathbb R^n\setminus\overline{B_1}$. By interior regularity   as in Lemma 17.16 of \cite{GT}, $u$ is smooth in $\mathbb R^n\setminus\overline{B_1}$. Let
  \begin{equation}\label{equ-def-vx}
v(x):=u(x)-\left(\frac{1}{2} x^{T} A x+\beta \cdot x+\gamma\right),\quad x\in \mathbb R^n\setminus\overline{B_1}.
\end{equation}
By a direct computation, $v$ satisfies
\begin{equation*}
a_{i j}(x) D_{i j} v:=\int_{0}^{1} D_{M_{ij}}F\left(t D^{2} v+A\right) \mathtt{d} t \cdot D_{i j} v=0\quad\text {in } \mathbb{R}^{n} \backslash \overline{B}_{1}.
\end{equation*}
By \eqref{ConvergenceSpeed1}, $v=O_l(|x|^{2-n})$ and
$$
|a_{ij}(x)-D_{M_{ij}}F(A)|\leq C|D^2v(x)|=O(|x|^{-n})\quad\text{as}~|x|\rightarrow+\infty.
$$
Expansion \eqref{convergenceSpeed2-ori} follows immediately from Corollary \ref{Coro-1}.
\end{proof}

We finish this section by proving \eqref{ConvergenceSpeed1} under conditions as in Theorem \ref{Thm-main-2}, then Theorem \ref{Thm-main-2} follows from Lemma \ref{lem-expansion-second}. Since the cases in \eqref{Case-MA} and \eqref{Case-SPL}, \eqref{ConvergenceSpeed1} is proved in \cite{CL} and \cite{ExteriorLiouville} respectively, here we only prove for cases \eqref{Case-small}-\eqref{Case-large}.

By interior regularity of fully nonlinear equations, see for instance Lemma 17.16 of \cite{GT}, $u\in C^{\infty}(\mathbb R^n\setminus\overline{B_1})$. By extension theorem as Theorem 6.10 of \cite{EvansMeasureTheory}, we extend $u|_{\mathbb R^n\setminus B_2}$ to $\mathbb R^n$ smoothly such that conditions \eqref{Case-small} and \eqref{Case-inverse} holds on entire $\mathbb R^n$.

\begin{proof}[Proof of Theorem \ref{Thm-main-2}.\eqref{Case-small}]
Let $$
\overline{u}(x):=u(x)+\dfrac{a+b}{2}|x|^2,
$$
then
\begin{equation*}
D^{2} \overline{u}=D^{2} u+(a+b) I>2bI\quad\text{in }\mathbb{R}^n.
\end{equation*}
Let $(\widetilde{x},\widetilde{v})$ be the Legendre transform of $(x,\overline{u})$, i.e.,
$$
\left\{
\begin{array}{ccc}
  \widetilde{x}:=D\overline{u}(x)=Du(x)+(a+b)x,\\
  D_{\widetilde{x}}v(\widetilde{x}):=x\\
\end{array}
\right.
$$
and  we have
\begin{equation*}
D_{\widetilde{x}}^{2} v(\widetilde{x})=\left(D^{2} \overline{u}(x)\right)^{-1}=(D^2u(x)+(a+b)I)^{-1}<\frac{1}{2b}I.
\end{equation*}
Let \begin{equation}\label{LegendreTransform}
\widetilde{u}(\widetilde{x}):=\dfrac{1}{2}|\widetilde{x}|^2-2bv(\widetilde{x}).
\end{equation}
By a direct computation,
\begin{equation}\label{property-Legendre}
\widetilde{\lambda}_{i}\left(D^{2} \widetilde{u}\right)=1-2 b \cdot \frac{1}{\lambda_{i}+a+b}=\frac{\lambda_{i}+a-b}{\lambda_{i}+a+b}\in (0,1).
\end{equation}
Thus $\widetilde{u}(\widetilde{x})$ satisfies the following Monge-Amp\`ere type equation
\begin{equation}\label{temp-16}
\widetilde F(\widetilde{\lambda}(D^2\widetilde{u})):=\sum _ { i = 1 } ^ { n } \ln \widetilde{\lambda_i}=\frac{2b}{\sqrt{a^2+1}}C_0,\ \text{in }\mathbb{R}^n\setminus\overline{\widetilde{\Omega}},
\end{equation}
for some bounded set $\widetilde{\Omega}=D\overline{u}(B_2)=Du(B_2)+(a+b)B_2$.

Moreover,  for any $x\in\mathbb{R}^n,\ \widetilde{x}=D\overline{u}(x)$, $$
|\widetilde{x}-\widetilde{0}|=|Du(x)-Du(0)+(a+b)x|
>2b|x|.
$$
Hence by triangle inequality, \begin{equation}\label{limitofX}
|\widetilde{x}|\geq
-|\widetilde{0}|+|\widetilde{x}-\widetilde{0}|
> -|\widetilde{0}|+2b|x|.
\end{equation}
Especially, \begin{equation}\label{temp-limit}
\lim_{|x|\rightarrow\infty}|\widetilde{x}|=\infty.
\end{equation}

  By \eqref{property-Legendre}, $\ln \widetilde{\lambda_i}(D^2\widetilde{u})\leq 0$ for all $i=1,\cdots,n$. By \eqref{Equ-exterior-domain},
  \begin{equation*}
\ln \widetilde{\lambda}_{i}=\ln \frac{\lambda_{i}+a-b}{\lambda_{i}+a+b} \geq \ln C',
\quad \forall~ i=1,\cdots, n,\quad\text{where}~
C'=\exp\left(\frac{2 b}{\sqrt{a^{2}+1}} C_{0}\right).
\end{equation*}
Thus $\widetilde{\lambda}_{i} \geq C'>0$ and
\begin{equation}\label{temp_lowerbound_1}
  \lambda_i\geq \dfrac{2b}{1-C'}-a-b=-a+b+\dfrac{C'}{1-C'}2b=:-a+b+\delta.
  \end{equation}
By a direct computation, $\frac{\partial \widetilde F}{\partial\widetilde{\lambda_i}}(\widetilde{\lambda})=\frac{1}{\widetilde{\lambda_i}}$ has both positive lower and upper bound, $\frac{\partial^2 \widetilde F}{\partial\widetilde{\lambda_i}^2}(\widetilde{\lambda})=-\frac{1}{\widetilde{\lambda_i}^2}<0$. Hence equation \eqref{temp-16} is uniformly elliptic and concave. By Theorem 2.1 of \cite{ExteriorLiouville},
 there exists $\widetilde{A}\in\mathtt{Sym}(n)$ satisfying
 $$ \widetilde F(\lambda(\widetilde{A}))=\ln C',~\lambda_i(\widetilde{A})\in [C',1],$$ such that
  \begin{equation}\label{LimitofHessian-AfterLegendre}
  \lim_{|\widetilde{x}|\rightarrow+\infty}D^2\widetilde{u}(\widetilde{x})= \widetilde{A}.
  \end{equation}
  By the property of Legendre transform, $$
  D^2\widetilde{u}(\widetilde{x})=I-2b(D^2u(x)+(a+b)I)^{-1},\quad\text{and}\quad
  \widetilde{x}=Du(x)+(a+b)x.
  $$

  Now we prove that all the eigenvalues  $\lambda_i( \widetilde{ A})$  are strictly less than 1, which implies $I-\widetilde{A}$ is an invertible matrix.  By contradiction and rotating the $\widetilde{x}$ -space to make $\widetilde{A}$ diagonal, we suppose that that $\widetilde{A}_{11}=$
  $1.$ Then by the asymptotic behavior of $D\widetilde{u}$ from Theorem 2.1 of \cite{ExteriorLiouville} and hence $Dv$, there exists $\widetilde{\beta}_1\in\mathbb{R}$ such that
  \begin{equation*} D_{\widetilde{x}_{1}}v=\widetilde{\beta}_1+O(|\widetilde{x}|^{1-n})
\end{equation*}
 as $|\widetilde{x}|\rightarrow\infty$. Thus by the definition of Legendre transform (\ref{LegendreTransform}) and (\ref{limitofX}),
\begin{equation}\label{strip-argument}
x_1=D_{\widetilde{x}_1}v(\widetilde{x})=\widetilde{\beta}_{1}+O\left(|\widetilde{x}|^{1-n}\right)
\end{equation}
as $|\widetilde{x}|\rightarrow\infty$.
By (\ref{temp-limit}), (\ref{strip-argument}) also implies that $\mathbb{R}^{n} \backslash \overline{B_2}$ is bounded in the $x_{1}$-direction, hence  a contradiction.  Thus $\lambda_i(\widetilde{A})<1$ strictly for every $i=1,\cdots,n$. Hereinafter we will state similar argument as ``strip argument'' for short, which is also used in Subsection 3.1 of \cite{ExteriorLiouville}.

 By (\ref{limitofX}) and (\ref{LimitofHessian-AfterLegendre}),
 $$
  \lim_{|x|\rightarrow+\infty}D^2u(x)
  =\dfrac{1}{2b}(I-\widetilde{A})^{-1}-(a+b)I,
  $$
  which is a bounded matrix. Together with $u\in C^2(\mathbb{R}^n)$, there exists a constant $M$ such that $$
  D^2u(x)\leq M\quad\forall~x\in\mathbb{R}^n.
  $$
By (\ref{temp_lowerbound_1}),  for $\lambda_i\in [-a+b+\delta,M]$ with $\delta>0$, $$
  \dfrac{\partial F_{\tau}}{\partial \lambda_i}(\lambda)=\frac{\lambda_{i}+a+b}{\lambda_{i}+a-b} \cdot \frac{2 b}{\left(\lambda_{i}+a+b\right)^{2}}=\frac{2 b}{\left(\lambda_{i}+a\right)^{2}-b^{2}}\in \left[\frac{2b}{(M+a)^2-b^2},\frac{2b}{(b+\delta)^2-b^2}\right],
  $$
  and $$
  \dfrac{\partial^2 F_{\tau}}{\partial \lambda_i^2}(\lambda)=-\frac{4 b\left(\lambda_{i}+a\right)}{\left[\left(\lambda_{i}+a\right)^{2}-b^{2}\right]^{2}}<0.
  $$
  Thus $F_{\tau}$ in \eqref{Equ-exterior-domain} is uniformly elliptic and concave under condition \eqref{Case-small} and the result follows from Theorem 2.1 of \cite{ExteriorLiouville}.
\end{proof}

\begin{proof}[Proof of Theorem \ref{Thm-main-2}.\eqref{Case-inverse}] Let $$
\overline{U}(x):=u(x)+\dfrac{1}{2}|x|^2,
$$
then $D^2\overline U>0$ in $\mathbb R^n$.
Let $(\widetilde{x},\widetilde u)$ be the Legendre transform of $(x,\overline{U})$ i.e.
\begin{equation*}
\left\{
\begin{array}{ccc}
  \widetilde{x}:=D\overline{U}(x)=Du(x)+x\\
  D_{\widetilde{x}}\widetilde u(\widetilde{x}):=x\\
\end{array}
\right.,
\end{equation*}
and we have
\begin{equation*}
D_{\widetilde{x}}^2\widetilde u(\widetilde{x})=(D^2\overline{U}(x))^{-1}.
\end{equation*}
Thus $\widetilde u(\widetilde{x})$ satisfies \begin{equation*}
-\Delta_{\widetilde{x}} \widetilde u=\dfrac{\sqrt{2}C_0}{2}\quad\text{in }\mathbb{R}^n\setminus\overline{\widetilde{\Omega}},
\end{equation*}
where $\widetilde{\Omega}=D\overline{U}(B_2)=Du(B_2)+B_2$.
Since $D^2\overline U>0$,
$$
  \dfrac{\sqrt{2}}{\lambda_i(D^2u)+1}
  =\sum_{j\not=i}\dfrac{-\sqrt{2}}{\lambda_j(D^2u)+1}-C_0< -C_0
  $$
   for all $i=1,\cdots,n$.
  Hence \begin{equation*}
  D^2_{\widetilde{x}}\widetilde u(\widetilde{x})
  =(D^2u(x)+I)^{-1}\leq \dfrac{-C_0}{\sqrt{2}}I.
  \end{equation*}
  Thus $D^2_{\widetilde{x}}\widetilde u(\widetilde{x})$ is positive and bounded.
  By Theorem 2.1 of \cite{ExteriorLiouville}, the limit of $D^2_{\widetilde{x}}V(\widetilde{x})$ exists as $|\widetilde{x}|\rightarrow\infty$.

  As in the proof of Theorem \ref{Thm-main-2}.\eqref{Case-small}, the limit of $D^2u(x)$ as $|x|\rightarrow\infty$ also exists by  strip argument (\ref{strip-argument}). Hence $D^2u$ is also bounded from above. Hence $F_{\tau}$ is uniformly elliptic and concave with respect to the sets of solution and the result follows from Theorem 2.1 of \cite{ExteriorLiouville}.
\end{proof}

\begin{proof}[Proof of Theorem \ref{Thm-main-2}.\eqref{Case-large}]

By a direct computation (see for instance \cite{huang2019entire,Warren}), if
 $\lambda_i>-a-b,$ for all $ i=1,2,\cdots,n$, then
  \begin{equation*}
  \sum_{i=1}^n\arctan \frac{\lambda_{i}+a-b}{\lambda_{i}+a+b}=\sum_{i=1}^n\arctan \left(\frac{\lambda_{i}+a}{b}\right)-\frac{n\pi}{4}.
  \end{equation*}
Let
  \begin{equation*}
v(x):=\frac{u(x)}{b}+\frac{a}{2 b}|x|^{2}.
\end{equation*}
 By a direct computation,
  \begin{equation*}
\sum_{i=1}^n\arctan \lambda_i(D^2v)=\frac{b}{\sqrt{a^2+1}}C_0
+\frac{n}{4}\pi
\end{equation*}
in $\mathbb R^n\setminus\overline{B_2}$. When \eqref{condition-temp-2} holds, $D^2v$  satisfies \eqref{equ-cond-spl}. When \eqref{condition-temp-3} holds, we have $\left|\frac{b C_{0}}{\sqrt{a^{2}+1}}+\frac{n \pi}{4}\right|>\frac{n-2}{2} \pi$. By Theorems 1.1 and 1.2 of \cite{ExteriorLiouville}, there exists $\overline {\gamma}\in\mathbb R, \overline {\beta}\in\mathbb R^n$, $\overline A\in\mathtt{Sym}(n)$ and $\overline A\geq 0$  or $\overline A>-\infty$ respectively such that \eqref{ConvergenceSpeed1} holds for $v$. The result follows immediately by the definition of $v$.
\end{proof}

\small

\bibliographystyle{plain}

\bibliography{AsymExpan}

\bigskip

\noindent Z.Liu \& J. Bao

\medskip

\noindent  School of Mathematical Sciences, Beijing Normal University\\
Laboratory of Mathematics and Complex Systems, Ministry of Education\\
Beijing 100875, China \\[1mm]
Email: \textsf{liuzixiao@mail.bnu.edu.cn}\\[1mm]
Email: \textsf{jgbao@bnu.edu.cn}

\end{document}